\documentclass[12pt,a4paper]{article}%
\usepackage[utf8]{inputenc}
\usepackage{hyperref}
\usepackage{amsmath}
\usepackage{amsfonts}
\usepackage{amssymb}
\usepackage{xcolor}
\usepackage[space]{grffile}
\usepackage{graphicx}%
\providecommand{\U}[1]{\protect\rule{.1in}{.1in}}
\newtheorem{theorem}{Theorem}

\newtheorem{lemma}[theorem]{Lemma}

\newtheorem{problem}[theorem]{Problem}
\newtheorem{proposition}[theorem]{Proposition}

\newtheorem{observation}[theorem]{Observation}

\newenvironment{proof}[1][Proof]{\noindent\textbf{#1.} }{\ \hfill \rule{0.5em}{0.5em}\bigskip}
\graphicspath{{D:/Dropbox/Riste-Sedlar/GeodesicSubpath/Slike/}}

\begin{document}

\title{Counting geodesic paths in graphs}
\author{Martin Knor$^{1}$, Jelena Sedlar$^{2,4}$, Riste \v{S}krekovski$^{3,4,5}$,
Xiao-Dong Zhang$^{6}$\\{\small $^{1}$ \textit{Slovak University of Technology, Bratislava, Slovakia
}}\\[0.1cm] {\small $^{2}$ \textit{University of Split, FGAG, Split, Croatia }}\\[0.1cm] {\small $^{3}$ \textit{University of Ljubljana, FMF, Ljubljana,
Slovenia }}\\[0.1cm] {\small $^{4}$ \textit{Faculty of Information Studies, Novo Mesto,
Slovenia }}\\[0.1cm] {\small $^{5}$ \textit{Rudolfovo - Science and Technology Centre, Novo
Mesto}\textit{, Slovenia }}\\[0.1cm] {\small $^{6}$ \textit{School of Mathematical Sciences, MOE-LSC and
SHL-MAC, }}\\{\small \textit{Shanghai Jiao Tong University, Shanghai, PR China }}}
\maketitle

\begin{abstract}
A geodesic is a shortest path which connects a pair of vertices of a graph
$G$. In this paper we define the geodesic subpath number $\mathrm{gpn}(G)$ of
a graph $G$ as the number of geodesics in $G$. The number of subtrees and
subpaths are already studied in literature, but they are both large
quantities. Hence, the geodesic subpath number which is related to these
quantities but smaller than both, seems worthy of investigation. We first
consider extremal graphs with respect to the geodesic subpath number among all
connected graphs on $n$ vertices. This number is minimized by the so called
geodetic graphs, i.e. graphs in which each pair of vertices is connected by
precisely one geodesic. As for the graphs which maximize the geodesic subpath
number, we provide an upper bound on $\mathrm{gpn}(G)$ in terms of $n$ and we
further consider several graph families which might have a large
$\mathrm{gpn}(G)$. Yet, their value of $\mathrm{gpn}(G)$ still does not attain
the established bound, so narrowing the gap remains as an open problem. We
also consider the class of cactus graphs on $n$ vertices and $k$ cycles and
among them characterize extremal graphs with respect to this new invariant.

\end{abstract}

\textit{Keywords:} subpaths; geodesics; counting; extremal graphs

\textit{MSC Subject Classification numbers:} 05C30; 05C38.

\section{Introduction}

In various problems including graphs, certain types of subgraphs play an
important role. So, it is of interest to count the number of such subgraphs.
For example, the number of spanning subtrees of a graph $G$ is extensively
studied, such as when $G$ is a complete multipartite graph \cite{YanSpanning},
a labeled molecular graph \cite{BrownSpanning}, a circulant graph
\cite{AtajanSpanning}, a quasi-threshold graph \cite{BapatSpanning}, a
multi-star related graph \cite{NikopoulosSpanning}, algorithms for finding
this number are also developed \cite{NikolopoulosAlgoritamSpanning}, etc.

It is of interest also to count subtrees of a graph $G$ without the additional
requirement that the subtree is spanning. So, the number of non-empty subtrees
of a graph $G,$ denoted by $N(G)$, also received much attention. It was first
considered for the case when $G$ is a tree \cite{Szekely2005}. The case when
$G$ belongs to various subfamilies of trees has also been studied, for example
binary trees \cite{Szekely2007}, trees with given maximum degree
\cite{Kirk2008}, trees with given degree sequence \cite{Zhang2013}. As of
lately, various broader families of graphs in relation to the subtree number
have also been researched, including graphs with given number of cut edges
\cite{Xu2021} and cacti and block graphs \cite{Cacti2022}. An interesting
result regarding the inverse problem for the subtree number has also been
obtained \cite{Czabarka2008}.

A subpath is a special kind of a subtree, and it is an important substructure
in many problems modeled by graphs. Also, the number of subtrees is usually
very large, so the number of subpaths which is smaller should be more
manageable. Nevertheless, it is $\#P$-hard to count subpaths, so the problem
is not trivial \cite{Yamamoto2017}. Motivated by all this, in our previous
papers we studied the subpath number of a graph and we derived its value for
some classes of graphs such as bipartite graphs, cycle chains \cite{Knor2025}
and cactus graphs \cite{Knor2025b}.

When considering subpaths, the shortest subpaths are of the greatest interest
in many problems. There are many papers which deal with the problem of
counting the shortest paths, mainly between two given vertices and from the
algorithmic point of view. To mention just a few recent ones, the researched
problems are counting shortest paths between two nodes in large road-network
graphs \cite{QiuShortest}, maintaining shortest-path counting indexes under
dynamic graph changes \cite{FengShortest}, addressing the bottleneck of
existing shortest-path \cite{PengShortest}.

Given all the context mentioned above, the geodesic subpath number of a graph
is defined as the number of all shortest paths in $G.$ A shortest path between
a pair of vertices in $G$ is called a \emph{geodesic}, hence the name. To be
more precise, for a pair of vertices $u$ and $v$ of a graph $G$, first the
geodesic subpath number $\mathrm{gpn}_{G}(u,v)$ is defined to be the number of
shortest paths in $G$ connecting $u$ and $v.$ Also, we define $\mathrm{gpn}%
_{G}(u)=%
{\displaystyle\sum_{v\in V(G)}}
\mathrm{gpn}_{G}(u,v).$ We will omit the subscript $G$ when the graph is clear
from the context. The \emph{geodesic subpath number} $\mathrm{gpn}(G)$ is
defined by%
\[
\mathrm{gpn}(G)=\sum_{\{u,v\}\subseteq V(G)}\mathrm{gpn}_{G}(u,v)+n,
\]
since there are $n$ geodesics of zero length. This quantity is related to the
number of subtrees and the number of subpaths, since a geodesic is both a
subtree and a subpath of a graph $G,$ but not every subtree or a subpath is a
geodesic. Hence, the geodesic subpath number is smaller than both the subtree
number and the subpath number of a graph. The concept of counting geodesics
has already been considered in the context of Leech labeling \cite{Indijka}.

There are many other graph theoretical concepts which involve shortest paths,
mainly their length, such as various metric dimensions \cite{Kuziak, Peterin}
and also the Wiener index \cite{risteSurvey1, risteSurvey2}, so it would be
interesting to investigate the possible relation between the geodesic subpath
number and these quantities.

In this paper, all graphs are tacitly assumed to be simple and connected, and
we study the geodesic subpath number of such graphs. Our main interest are the
graphs which minimize and the graphs which maximize the geodesic subpath
number. It is easily seen that the minimal graphs are the so called geodetic
graphs, i.e. graphs in which every pair of vertices is connected by precisely
one shortest path.

The question of maximal graphs does not seem to be so easy. We provide an
upper bound on the geodesic subpath number in terms of the number of vertices.
Next, we provide a formula for the exact value of the geodesic subpath number
for several graph classes which are likely to have a large geodesic subpath
number, such as the family of graphs $G_{k,t}$ obtained as a sequential join
of multiple copies of empty graphs, the hypercubes and the grids. Using these
formulas, we establish that among these graphs $G_{3,n/3}$ has the largest
value of the geodesic subpath number. Since $\mathrm{gpn}(G_{3,n/3})$ still
does not attain the established upper bound, the problem of narrowing the gap
by either lowering the upper bound or finding graphs $G$ with larger value of
$\mathrm{gpn}(G_{3,n/3})$ remains open. We also consider the class of cactus
graphs on $n$ vertices having $k$ cycles and characterize extremal graphs in
this class.

\section{Extremal graphs}

A shortest path which connects a pair of vertices $u$ and $v$ is called
$(u,v)$\emph{-geodesic}. The \emph{distance} between vertices $u$ and $v$ of
$G,$ denoted by $\mathrm{dist}_{G}(u,v),$ is the length of a $(u,v)$-geodesic.
A \emph{geodetic graph} is a graph $G$ where every pair of vertices $u$ and
$v$ of $G$ has a unique $(u,v)$-geodesic connecting them.

We want to establish which graphs minimize the geodesic subpath number and
which ones maximize it among all connected graphs on $n$ vertices. In a
connected graph every pair of vertices $u$ and $v$ is connected by at least
one path, so $\mathrm{gpn}(u,v)\geq1$ which implies $\mathrm{gpn}(G)\geq
\binom{n}{2}+n=\binom{n+1}{2}.$ Here, the equality holds if and only if $G$ is
a geodetic graph, i.e. if $\mathrm{gpn}(u,v)=1$ for every $u,v\in V(G).$ We
state this formally in the following observation.

\begin{observation}
\label{Observation_geodetic}Let $G$ be a graph on $n$ vertices. Then,
$\mathrm{gpn}(G)\geq\binom{n+1}{2}$ with equality if and only if $G$ is a
geodetic graph.
\end{observation}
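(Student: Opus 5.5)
The plan is to compare $\mathrm{gpn}(G)$ term by term with its smallest possible value, using the definition
\[
\mathrm{gpn}(G)=\sum_{\{u,v\}\subseteq V(G)}\mathrm{gpn}_{G}(u,v)+n .
\]
Throughout, $G$ is assumed connected, as the paper tacitly does. Without connectivity a disconnected pair would have $\mathrm{gpn}(u,v)=0$ and the bound could fail.

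The first step is a lower bound for each summand. For every unordered pair $\{u,v\}$ of distinct vertices, connectivity gives at least one $(u,v)$-path. Among these finitely many paths, one of minimum length exists, and it is a $(u,v)$-geodesic by definition. Hence $\mathrm{gpn}(u,v)\geq 1$. Summing over the $\binom{n}{2}$ pairs and adding the $n$ trivial geodesics gives
\[
\mathrm{gpn}(G)\geq\binom{n}{2}+n=\binom{n+1}{2},
\]
where the last equality is Pascal's rule.

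The second step is the equality case. Write
\[
\mathrm{gpn}(G)-\binom{n+1}{2}=\sum_{\{u,v\}}\bigl(\mathrm{gpn}(u,v)-1\bigr).
\]
This is a sum of nonnegative integers, so it vanishes if and only if every term vanishes. That means $\mathrm{gpn}(u,v)=1$ for all pairs, which is exactly the statement that every pair has a unique geodesic, i.e.\ that $G$ is geodetic. Both directions of the ``if and only if'' follow from this one identity.

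There is no real obstacle here. The only point needing care is to state explicitly that connectivity is what guarantees $\mathrm{gpn}(u,v)\geq 1$.
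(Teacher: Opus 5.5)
Your proposal is correct and follows essentially the same argument as the paper. Connectivity gives $\mathrm{gpn}(u,v)\geq 1$ for each of the $\binom{n}{2}$ pairs, adding the $n$ trivial geodesics yields $\binom{n+1}{2}$, and equality forces every pair to have a unique geodesic; your explicit note that connectivity is required (the paper only assumes it tacitly) is a sensible addition.
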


The notion of geodetic graph was first introduced in \cite{Ore1962} and it was
extensively studied since, let us mention just a few recent papers
\cite{Cornelsen2022, Elder2021, Gorovoy}. The class of geodetic graphs
contains trees, the Petersen graph, the Hoffman--Singleton graph, strongly
regular graphs where every two non-adjacent vertices have one common neighbor,
complete graphs, Gallai trees, cactus graphs in which all cycles are of odd length.

\bigskip

Hence, we have resolved the question of which graphs minimize the geodesic
subpath number. Determining which graphs maximize the geodesic subpath number
appears to be considerably more challenging. Let us first establish the upper
bound on $\mathrm{gpn}(G)$ for a general graph on $n$ vertices.

\begin{theorem}
\label{thm:bound} Let $G$ be a graph on $n\geq1$ vertices. Then
\[
\mathrm{gpn}(G)\leq\frac{n}{4}(23\cdot3^{\frac{n-6}{3}}+1).
\]

\end{theorem}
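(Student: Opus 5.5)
The plan is to bound, for each fixed vertex $u$, the quantity $\mathrm{gpn}(u)=\sum_{v}\mathrm{gpn}(u,v)$ by a function of $n$ alone, and then sum over $u$. Since every unordered pair is counted twice in $\sum_u\mathrm{gpn}(u)$ (once from each end), we get
\[
\mathrm{gpn}(G)=\tfrac12\sum_{u\in V(G)}\bigl(\mathrm{gpn}(u)-1\bigr)+n ,
\]
because $\mathrm{gpn}(u,u)=1$. Hence any bound $\mathrm{gpn}(u)-1\le F(n-1)$ that holds uniformly in $u$ yields $\mathrm{gpn}(G)\le \frac n2\bigl(F(n-1)+2\bigr)$. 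The constants will then be tuned so that this matches $\frac n4(23\cdot 3^{\frac{n-6}{3}}+1)$.

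\textbf{Step 1: BFS-layer bound for a fixed $u$.} Let $L_0=\{u\},L_1,\dots,L_d$ be the distance layers from $u$, and put $a_i=|L_i|$, so that $a_1+\dots+a_d=n-1$. A $(u,v)$-geodesic to $v\in L_i$ visits exactly one vertex of each $L_j$ with $j<i$. Therefore $\mathrm{gpn}(u,v)\le a_1a_2\cdots a_{i-1}$. Summing over the $a_i$ vertices of $L_i$ gives
\[
\mathrm{gpn}(u)-1\le \sum_{i=1}^d a_1a_2\cdots a_i .
\]
Thus $\mathrm{gpn}(u)-1\le F(n-1)$, where $F(m)$ is the maximum of the sum of prefix products over all compositions of $m$. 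This $F$ satisfies the recursion $F(0)=0$ and
\[
F(m)=\max_{1\le a\le m} a\bigl(1+F(m-a)\bigr),
\]
which follows from the nested form $a_1(1+a_2(1+a_3(\cdots)))$.

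\textbf{Step 2: solve the recursion.} I would prove by induction on $m$ an inequality of the form $F(m)+\beta\le C\cdot 3^{m/3}$, with $C$ and $\beta$ chosen to reproduce the stated bound. The target is $\frac n2(F(n-1)+2)\le\frac n4(23\cdot3^{(n-6)/3}+1)$, that is, $F(n-1)\le \frac{23}{2}3^{(n-6)/3}-\frac32$. The base cases, say $m\le 6$, are computed directly from the recursion; I expect the extremal compositions there to be of the form $2,3,3,\dots$ or $3,3,\dots,4$, which explains the constant $23$.

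For the inductive step, write $a(1+F(m-a))\le a\,C\,3^{(m-a)/3}+a(1-\beta)$. Then use $a\,3^{-a/3}\le 1$ for all positive integers $a$, with equality only at $a=3$; for example $a=2$ gives about $0.96$ and $a=4$ gives about $0.95$. This yields $F(m)\le C3^{m/3}+(\text{lower-order term})$.

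\textbf{Main obstacle.} The delicate point is the additive lower-order term. For $a=3$ the multiplicative factor gives no slack, while the term $a(1-\beta)$ must still be absorbed. I would therefore first fix $\beta$ from the fixed point of the map $x\mapsto 3(1+x)$ shifted appropriately, namely $\beta=-\tfrac12$, in the spirit of $3+9+\dots=\frac{3^{k+1}-3}{2}$. For $a\neq 3$, I would use the gap $1-a3^{-a/3}$ times $C3^{(m-a)/3}$, which is large once $m-a$ is beyond the base range. Very large $a$ is trivial, since then $a(1+F(m-a))$ is polynomially small. The remaining finitely many small cases are checked by hand.

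Finally, I would substitute $m=n-1$ and simplify to obtain the inequality of the theorem. For $n$ small, where the formula's exponent is negative, I would verify the bound directly.
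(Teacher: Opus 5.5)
Your reduction is the same as the paper's. You bound $\mathrm{gpn}(u,v)$ for $v$ in the $i$-th distance layer of $u$ by $a_1\cdots a_{i-1}$, which reduces everything to maximizing the sum of prefix products $F(m)$ over compositions of $m=n-1$. You then sum over $u$. You diverge in how you solve this optimization. The paper uses exchange arguments on the layer vector: no layer exceeds $3$, sizes are nonincreasing, there are at most two $1$'s and at most three $2$'s. From these it identifies the optimal profiles $(3,\dots,3,2,1)$, $(3,\dots,3,2,2,2,1)$ and $(3,\dots,3,2,2,1)$ for $m\equiv 0,1,2 \pmod 3$, evaluates them, and notes that the class $m\equiv 2\pmod 3$ dominates. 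You instead use the recursion $F(m)=\max_a a(1+F(m-a))$ and prove one inequality $F(m)+\beta\le C3^{m/3}$ by induction. This avoids the case analysis of Claim~D and treats all residues at once. What you give up is the explicit description of the maximizing profiles, which the theorem does not need.

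As written, though, your induction does not close, because your shift has the wrong sign. The fixed point of $x\mapsto 3(1+x)$ is $x=-\frac32$, so the correct choice is $\beta=+\frac32$. This is also what your own target $F(n-1)\le\frac{23}{2}3^{(n-6)/3}-\frac32$ says. With $\beta=-\frac12$ the case $a=3$ would require $3(1-\beta)\le-\beta$, i.e.\ $\beta\ge\frac32$.

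With $\beta=\frac32$ and $C=\frac{23}{2}3^{-5/3}$, the inductive step for a given $a$ reduces to
\[
\bigl(1-a3^{-a/3}\bigr)\,C\,3^{m/3}\ \ge\ \frac{3-a}{2}.
\]
This is automatic for every $a\ge 3$, so large $a$ needs no separate treatment. It holds for $a=1$ as soon as $C3^{m/3}\ge 1/(1-3^{-1/3})\approx 3.3$. For $a=2$ it holds only when $C3^{m/3}\gtrsim 13$, i.e.\ for $m\ge 6$, because $1-2\cdot 3^{-2/3}\approx 0.038$ is tiny.

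So your base range must be $0\le m\le 5$, checked from $F(0),\dots,F(5)=0,1,2,4,6,10$. There is equality at $m=5$, and this is where $23$ comes from: $F(5)=10$ is attained by $(2,2,1)$, so $F(5)+\frac32=\frac{23}{2}$. Your guess that small maximizers look like $2,3,3,\dots$ or $3,\dots,3,4$ is wrong. They are nonincreasing and end in $1$, e.g.\ $(2,2,1)$, $(3,2,1)$, $(3,2,2,1)$. This plays no role once you compute the base cases from the recursion.

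With these fixes you get $F(m)\le\frac{23}{2}3^{(m-5)/3}-\frac32$ for all $m\ge 0$. This gives the theorem for every $n\ge 1$ with no separate small-$n$ verification.
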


\begin{proof}
For $n\leq6,$ it is easily verified that the bound holds. In particular, the
exstremal graphs for the geodesic subpath number are $K_{1},$ $K_{2},$
$\{K_{3},K_{1,2}\},$ $K_{2,2},$ $K_{2,3},$ $K_{3,3}$ for $n=1,\ldots,6,$
respectively. Hence, let us assume that $n\geq7$ and let $u\in V(G)$. We will
establish the upper bound on $\mathrm{gpn}(u),$ i.e. the number of geodesics
starting at $u.$ Given a vertex $u,$ we define
\[
A_{i}(u)=\{w\in V(G):\mathrm{dist}(u,w)=i\}.
\]
For the sake of simplicity, we will write $A_{i}$ instead of $A_{i}(u)$ when
this does not lead to confusion. Denote $a_{i}=\left\vert A_{i}\right\vert $
and notice that $A_{0}=\{u\}$. Also, for $v\in A_{i}(u),$ where $i\geq1,$
every $(u,v)$-geodesic contains precisely one vertex from $A_{j}$ for every
$j\in\{1,\ldots,i-1\}$. Therefore, the $\mathrm{gpn}(u,v)$ attains its maximum
if every choice of vertices $v_{1},\dots,v_{i-1}$, where $v_{j}\in A_{j}$,
gives a path. Hence, $a_{1}\cdot a_{2}\cdots a_{i-1}$ is the upper bound on
$\mathrm{gpn}(u,v).$ Denote by $t=\mathrm{\max}\{\mathrm{dist}(u,v):v\in
V(G)\}$ the eccentricity of $u.$ Let us assume that $G$ is a graph on $n$
vertices and $u$ a vertex of $G$ such that $\mathrm{gpn}(u)$ attains the
maximum possible value. In the following claims, we give some structural
properties of $A_{i}$'s for such a vertex $u.$

\bigskip

\noindent\textbf{Claim A.} \emph{It holds that }$a_{i}\leq3$\emph{ for every
}$i.$

\smallskip

\noindent Assume first that there is an $A_{i}$ with $a_{i}\geq5.$ We split
the set $A_{i}$ into two smaller ones $A_{i_{1}}$ and $A_{i_{2}},$ with
$\left\vert A_{i_{1}}\right\vert =2$ and $|A_{i_{2}}|=a_{i}-2$, so that
$\mathrm{dist}(u,v)$ increases by $1$ for every vertex $v\in A_{i_{2}}$ and
for all vertices of $v\in A_{j}$ for $j>i.$ Notice that $\mathrm{gpn}(u,v)$
remains the same for every $v\in A_{j},$ where $j\leq i-1$ and for $v\in
A_{i_{1}}.$ For $v\in A_{i_{2}},$ the $\mathrm{gpn}(u,v)$ increases by
multiple $2.$ For $v\in A_{j},$ where $j>i,$ a factor $a_{i}$ is replaced by
$2(a_{i}-2)$ in an upper bound $a_{1}\cdot a_{2}\cdots a_{i-1}$ for
$\mathrm{gpn}(u,v).$ Since $2(a_{i}-2)$ is larger than $a_{i}$ for $a_{i}%
\geq5,$ we obtain the claim.

Assume next that there is an $A_{i}$ with $a_{i}=4.$ In this case, we can
split $A_{i}$ into two sets $A_{i_{1}}$ and $A_{i_{2}}$, both of size $2$.
Then, for all vertices $v\in A_{i_{1}}$ and all vertices $v\in A_{j},$ where
$j<i,$ the value of $\mathrm{gpn}(u,v)$ remains the same. Also, for every
vertex $v\in A_{j},$ where $j>i,$ the factor $a_{i}=4$ is replaced by
$2(a_{i}-2)=4,$ so the value of $\mathrm{gpn}(u,v)$ again remains the same.
Yet, for vertices $v\in A_{i_{2}},$ the value of $\mathrm{gpn}(u,v)$ increases
by the multiple of $2,$ which yields the claim.

\bigskip

\noindent\textbf{Claim B.} \emph{If }$i<k$\emph{, then }$a_{i}\geq a_{k}%
$\emph{.}

\smallskip

\noindent Assume to the contrary, that $i<k$ and $a_{i}<a_{k}.$ Then, there
must exist two consecutive $A_{i}$ and $A_{i+1}$ such that $a_{i}<a_{i+1}.$
Denote $p=a_{1}\cdots a_{i-1}.$ We may switch $A_{i}$ and $A_{i+1},$ and we
show that this results in the increase of $\mathrm{gpn}(u).$ By Claim A, we
may assume $a_{j}\leq3$ for every $1\leq j\leq t,$ so there are three possibilities.

\begin{itemize}
\item If $a_{i}=1$ and $a_{i+1}=2$ then for $v\in A_{i+1},$ the value of
$\mathrm{gpn}(u,v)$ remains the same after switching, while for $v\in A_{i}$
the value of $\mathrm{gpn}(u,v)$ increases by the factor of $2$. For all other
$v$ the value of $\mathrm{gpn}(u,v)$ remains the same. Hence, $\mathrm{gpn}%
(u)$ increases, a contradiction.

\item If $a_{i}=1$ and $a_{i+1}=3,$ the same argument as above applies, only
the increase of $\mathrm{gpn}(u,v)$ for $v\in A_{i}$ is by the factor of $3$
instead of $2.$

\item If $a_{i}=2$ and $a_{i+1}=3,$ then for each $v\not \in A_{i}\cup
A_{i+1}$ remains the same, similarly as in previous cases. Further, for each
vertex $v\in A_{i}$ the value of $\mathrm{gpn}(u,v)$ increases from $p$ to
$3p,$ while for each vertex $v\in A_{i+1}$ the value of $\mathrm{gpn}(u,v)$
decreases from $2p$ to $p.$ This means that the change of $\mathrm{gpn}(u)$ is
equal to
\[
a_{i}(3p-p)+a_{i+1}(p-2p)=2(3p-p)+3(p-2p)=p>0.
\]
so the value of $\mathrm{gpn}(u)$ increases, again a contradiction.
\end{itemize}

\noindent We have established the claim in all three possible cases, so we are done.

\bigskip

Due to Claim B, we know that the vector $(a_{1},\ldots,a_{t})$ is of the form
$(3^{a},2^{b},1^{c}),$ where $3^{a}$ denotes the $a$ consecutive coordinates
with value $3,$ and similarly holds for $2^{b}$ and $1^{c}.$ Obviously, it
must hold that $n=1+3a+2b+c.$ Let us show some claims regarding $b$ and $c.$

\bigskip

\noindent\textbf{Claim C.} \emph{It holds that }$c\leq2.$

\smallskip

\noindent Assume to the contrary that $c>2.$ If $a_{t-2}=a_{t-1}=1,$ we can
join $A_{t-2}$ and $A_{t-1}$ into a single set of the size $2.$ Then the value
of $\mathrm{gpn}(u,v)$ remains the same for every vertex except for $v\in
A_{t}$ for which it increases by the factor of $2.$ Hence, $\mathrm{gpn}(u)$
is not the maximum possible, a contradiction.

\bigskip

\noindent\textbf{Claim D.} \emph{It holds that }$b\leq3.$ \emph{Moreover,
}$b=3$\emph{ if and only if }$n\equiv2(\operatorname{mod}3)$.

\smallskip

\noindent Let $a_{i}=a_{i+1}=a_{i+2}=2.$ Denote $N_{i}=\sum_{j=i}%
^{t}\left\vert A_{i}\right\vert $ and $n_{i}=\left\vert N_{i}\right\vert $. It
is obvious that $n_{i}\geq6.$

If $n_{i}\geq9,$ we can join three consecutive sets $A_{i}$, $A_{i+1}$ and
$A_{i+2}$ of size two into two consecutive sets $A_{i_{1}}$ and $A_{i_{2}}$ of
size three. Notice that for each vertex $v\in A_{j},$ where $j<i$, the value
of $\mathrm{gpn}(u,v)$ remains the same. Also here denote $p=a_{1}\cdots a_{i-1}$.  The total change of $\mathrm{gpn}(u)$
for vertices $v$ from $A_{i}\cup A_{i+1}\cup A_{i+2}$ is
\[
(3\cdot p+3\cdot3p)-(2\cdot p+2\cdot2p+2\cdot4p)=-2p<0.
\]
But, for all vertices $v\in A_{j}$, where $j>i+2,$ the three factors
$a_{i}\cdot a_{i+1}\cdot a_{i+2}=8$ are replaced by $\left\vert A_{i_{1}%
}\right\vert \cdot\left\vert A_{i_{2}}\right\vert =9,$ so their contribution
to $\mathrm{gpn}(u)$ increases by at least $p$. Since $n_{i}\geq9,$ there are
at least three such vertices, so the total change of $\mathrm{gpn}(u)$ is
positive, a contradiction with $\mathrm{gpn}(u)$ being maximum possible.

If $n_{i}=8,$ then the only vectors $(a_{i},a_{i+1},\ldots,a_{t})$ with
$b\geq3$ are $(2,2,2,2)$ and $(2,2,2,1,1)$. In both these cases the
contribution of vertices $v\in N_{i}$ to $\mathrm{gpn}(u)$ is $30p$. However,
there are two additional possibilities for this vector, namely $(3,3,2)$ and
$(3,2,2,1),$ with contributions of vertices $v\in N_{i}$ to $\mathrm{gpn}(u)$
being $30p$ and~$33p,$ respectively. Since the contribution of vertices
$v\notin N_{i}$ to $\mathrm{gpn}(u)$ is the same in all these cases, the value
of $\mathrm{gpn}(u)$ is maximal in the case of $(3,2,2,1),$ which proves the claim.

If $n_{i}=7,$ then the only vector $(a_{i},a_{i+1},\ldots,a_{t})$ with
$b\geq3$ is $(2,2,2,1).$ In this case, the contribution of vertices $v\in
N_{i}$ to $\mathrm{gpn}(u)$ is $22p.$ Notice that here we have $n-1\equiv
1\pmod{3}$ and $c=1,$ so we will show that for this vector $\mathrm{gpn}(u)$
is maximal. To see this, observe that there are three additional possibilities
for this vector, namely $(3,3,1),$ $(3,2,2),$ and $(3,2,1,1).$ The
contribution of the vertices of $N_{i}$ to $\mathrm{gpn}(u)$ in each of these
three cases is $21p$. Hence, the value of $\mathrm{gpn}(u)$ is maximal in the
case of $(2,2,2,1)$ as claimed.

If $n_{i}=6,$ then the only vector $(a_{i},a_{i+1},\ldots,a_{t})$ with
$b\geq3$ is $(2,2,2).$ In the case of this vector, the contribution of
vertices from $N_{i}$ to $\mathrm{gpn}(u)$ is $12p.$ Let us show that
$\mathrm{gpn}(u)$ is not maximal for this vector. We do that by considering
the three remaining possibilities which are $(3,3),$ $(3,2,1),$ and
$(2,2,1,1).$ The contribution of vertices of $N_{i}$ to $\mathrm{gpn}(u)$ in
the four mentioned cases is $12p,$ $15p,$ and $14p,$ respectively. Hence, the
value of $\mathrm{gpn}(u)$ is maximum in the case $(3,2,1),$ which proves the claim.

Thus, we have now finished the consideration of all the possible cases, which implies that 
$b\le 3$. Since $n\ge 7$, it also implies that $b=3$ if and only if $n-1\equiv1(\operatorname{mod}3)$. This establishes  Claim D.

\bigskip

We are now in a position to establish the upper bound on $\mathrm{gpn}(u).$
Assume $n=1+3q+r$ for $0\leq r\leq2.$ Recall that in Claim D we established
the vector $(a,b,c)$ which gives maximum possible $\mathrm{gpn}(u)$ for
$n_{i}$ equal to $8,$ $7$ and $6.$ These cases correspond to $r$ being equal
to $2,$ $1$ and $0.$ Hence, Claim D implies that $\mathrm{gpn}(u)$ is maximum
possible if and only if%
\[
(a,b,c)=\left\{
\begin{array}
[c]{cc}%
((n-4)/3,1,1) & \text{for }r=0,\\
((n-8)/3,3,1) & \text{for }r=1,\\
((n-6)/3,2,1) & \text{for }r=2.
\end{array}
\right.
\]
Since
\begin{align*}
\mathrm{gpn}(u)  &  =1+\sum_{i=1}^{a}3\cdot3^{i-1}+3^{a}\sum_{i=1}^{b}%
2\cdot2^{i-1}+c\cdot3^{a}\cdot2^{b}\\
&  =2^{b+1}3^{a}-\frac{1}{2}3^{a}+2^{b}3^{a}c-\frac{1}{2},
\end{align*}
plugging the above values of $(a,b,c)$ gives the following upper bounds for
these cases%
\[
\mathrm{gpn}(u)\leq\left\{
\begin{array}
[c]{cc}%
\frac{1}{2}(11\cdot3^{\frac{n-4}{3}}-1) & \text{for }r=0,\\
\frac{1}{2}(47\cdot3^{\frac{n-8}{3}}-1) & \text{for }r=1,\\
\frac{1}{2}(23\cdot3^{\frac{n-6}{3}}-1) & \text{for }r=2.
\end{array}
\right.
\]
It is easily verified that the largest of the three bounds is the one for
$r=2$. Now, from $\mathrm{gpn}(G)=\frac{1}{2}\sum_{u\in V(G)}\mathrm{gpn}%
(u)+\frac{n}{2}$ follows the claim of the theorem and we are finished.
\end{proof}

Next, we consider several classes of graphs which might have a large geodesic
subpath number. The first such class consists of graphs obtained as a
sequential join of multiple copies of empty graphs. More formally, by $D_{k}$
we denote a discrete graph on $k$ vertices, i.e., the complement of $K_{k}$.
Let $G_{k,t}$ be a sequential join of $t$ copies of $D_{k}$ in a way that
$i$-th copy of $D_{k}$ is joined to $(i+1)$-th copy for every $i\in
\{1,\ldots,t-1\}$. In the next theorem we establish the formula for the exact
value of the geodesic subpath number for graphs $G_{k,t}.$

\begin{proposition}
\label{thm:sjoin}Let $k,t\geq2$ be integers. Then
\[
\mathrm{gpn}(G_{k,t})=\frac{1}{k-1}\left(  \frac{k^{t+2}-k^{3}}{k-1}%
+k^{3}(k-2)(t-1)\right)  +kt.
\]

\end{proposition}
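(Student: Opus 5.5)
The plan is to compute $\mathrm{gpn}_{G_{k,t}}(u,v)$ explicitly for every pair of vertices and then sum. Denote the copies of $D_k$ by $V_1,\ldots,V_t$, so that $G_{k,t}$ has an edge between $u\in V_i$ and $v\in V_j$ if and only if $|i-j|=1$. The pairs split into two types: those in different layers and those in the same layer. The $kt$ geodesics of length zero account for the final term $kt$ in the formula.

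\emph{Different layers.} Let $u\in V_i$ and $v\in V_j$ with $i<j$, and put $d=j-i$. Every path from $u$ to $v$ must visit each of $V_{i+1},\ldots,V_{j-1}$, because edges only join consecutive layers. Hence $\mathrm{dist}(u,v)=d$, and a geodesic contains exactly one vertex from each intermediate layer. Conversely, any choice of one vertex in each of $V_{i+1},\ldots,V_{j-1}$ gives a path, since consecutive layers are completely joined. Therefore $\mathrm{gpn}(u,v)=k^{d-1}$. For a fixed $d$ there are $t-d$ ordered pairs of layers $(i,j)$ and $k^{2}$ vertex pairs in each, so these pairs contribute
\[
S_1=\sum_{d=1}^{t-1}(t-d)\,k^{d+1}.
\]

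\emph{Same layer.} Let $u\neq v$ lie in $V_i$. They are nonadjacent but have a common neighbour, so $\mathrm{dist}(u,v)=2$. The $(u,v)$-geodesics are the paths $u\,w\,v$ with $w\in V_{i-1}\cup V_{i+1}$. This gives $k$ geodesics when $i\in\{1,t\}$ and $2k$ geodesics otherwise. Summing over the $\binom{k}{2}$ pairs in each layer, these pairs contribute
\[
S_2=\binom{k}{2}\bigl(2k+2k(t-2)\bigr)=k^{2}(k-1)(t-1).
\]
This uses $t\geq2$: the two end layers each give $k$, and the $t-2$ inner layers each give $2k$.

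\emph{Closed form.} It remains to evaluate $S_1$. I would use the standard identity for $\sum_{d}(t-d)x^{d}$, obtained by differentiating a geometric series or by writing $\sum_{d=1}^{t-1}(t-d)x^d=\sum_{m=1}^{t-1}\sum_{d=1}^{m}x^d$. This should give
\[
S_1=\frac{k^{t+2}-k^{3}}{(k-1)^{2}}-\frac{k^{2}(t-1)}{k-1}.
\]
As a check, $t=2$ gives $S_1=k^2$, as expected. Adding $S_2$ and using $k^{2}(k-1)(t-1)-\frac{k^2(t-1)}{k-1}=\frac{k^{3}(k-2)(t-1)}{k-1}$ yields exactly the stated expression. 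The only step requiring care is this algebraic simplification of the arithmetico-geometric sum. The combinatorial counting is straightforward once the layer structure of the geodesics is noted.
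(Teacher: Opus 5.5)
Your proposal is correct and follows the paper's proof essentially step for step. It uses the same split into pairs in different copies (each contributing $k^{d-1}$ geodesics, with $t-d$ layer pairs and $k^{2}$ vertex pairs for each $d$) and pairs in the same copy ($k$ or $2k$ geodesics depending on whether the copy is an end copy), plus the $kt$ trivial geodesics. You also supply the derivation of the closed form for $\sum_{d=1}^{t-1}(t-d)k^{d+1}$ and the final algebraic simplification, both of which the paper only asserts.
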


\begin{proof}
For $i\in\{1,\ldots,t\}$, denote by $D_{k}^{i}$ the $i$-th copy of $D_{k}$
within $G_{k,t}.$ Here, we assume that copies of $D_{k}^{i}$ are denoted
sequentially, i.e. that contracting all vertices of each copy of $D_{k}$ into
a single vertex results to a path $H=h_{1}h_{2}\cdots h_{t}$ where $D_{k}^{i}$
is contracted into $h_{i}.$

Let us first consider the contribution of a pair of vertices $u\in D_{k}^{i}$
and $v\in D_{k}^{j}$ for $i\not =j.$ Denote $d=j-i$ and notice that
$\mathrm{dist}_{H}(h_{i},h_{j})=d.$ Since $i,j\in\{1,\ldots,t\},$ it follows
that $d\in\{1,\ldots,t-1\}.$ For a given $d,$ indices $i$ and $j$ can be
chosen in $t-d$ ways. Also, for a given $i$ and $j,$ a pair $u$ and $v$ can be
chosen in $k^{2}$ ways. Finally, for a chosen $u$ and $v$ it holds that
$\mathrm{gpn}(u,v)=k^{d-1}.$ Hence, the contribution of pairs $u\in D_{k}^{i}$
and $v\in D_{k}^{j}$, where $i\not =j$, to $\mathrm{gpn}(G)$ is
\[
\sum_{d=1}^{t-1}(t-d)\cdot k^{2}\cdot k^{d-1}=\frac{1}{k-1}\left(
\frac{k^{t+2}-k^{3}}{k-1}-(t-1)k^{2}\right)  .
\]
It remains to consider pairs $u$ and $v$ which belong to the same copy of
$D_{k}.$ So, let $u,v\in D_{k}^{i}$ be such a pair of vertices in $G_{k,t}.$
Notice that $i$ can be chosen in $t$ different ways, and for a given $i$ there
are $\binom{k}{2}$ pairs of vertices $u,v\in D_{k}^{i}.$ Finally, for a given
pair of vertices $u,v\in D_{k}^{i}$ it holds that $\mathrm{gpn}(u,v)=2k$ if
$i\in\{2,\ldots,t-1\},$ and $\mathrm{gpn}(u,v)=k$ if $i\in\{1,t\}.$ We
conclude that the contribution to $\mathrm{gpn}(G)$ of all pairs which belong
to the same copy of $D_{k}$ is
\[
(t-2)\binom{k}{2}\cdot2k+2\binom{k}{2}\cdot k=k^{2}\left(  k-1\right)  \left(
t-1\right)  .
\]
Summing the contribution of pairs which belong to distinct copies of $D_{k}$
and the contribution of pairs which belong to the same copy of $D_{k}$,
together with the $kt$ geodesics of length $0$, yields the claim.
\end{proof}

The formula for the geodesic subpath number in the above result depends on $k$
and $t.$ Since we seek graphs on $n$ vertices which have large geodesic
subpath number and consequently could be good candidates for the maximal
graphs, we wish to derive the formula which depends on the number of vertices
$n$ in $G_{k,t}$. For that purpose, let $n$ be the number of vertices of
$G_{k,t}$. Then $n=kt$ and $t=n/k$. Hence, when $t$ increases we can
approximate with
\[
\mathrm{gpn}(G_{k,t})\sim\left(  \frac{k}{k-1}\right)  ^{2}\cdot k^{n/k}.
\]
If $n$ is small then $(\frac{2}{1})^{2}\cdot2^{n/2}>(\frac{3}{2})^{2}%
\cdot3^{n/3}$. But, from $n\geq30$ we have $(\frac{2}{1})^{2}\cdot
2^{n/2}<(\frac{3}{2})^{2}\cdot3^{n/3}$ and we get smaller values also if
$k\geq4$. Hence, the maximum number of geodesics is obtained when $G$ is the
sequential join of $n/3$ copies of $D_{3}$, in which case
\[
\mathrm{gpn}(G_{3,n/3})\sim\left(  \frac{3}{2}\right)  ^{2}\cdot3^{n/3}.
\]
Note that this is just a linear factor away from the upper bound of Theorem
\ref{thm:bound}.

Next, we determine the exact geodesic subpath number of hypercubes, which
constitute another class of graphs that may attain large, and possibly
extremal, values of this parameter. Let $Q_{r}$ be the $r$-dimensional
hypercube. Then $Q_{r}$ has $n=2^{r}$ vertices. The next theorem gives a
formula for the exact value of the geodesic subpath number for $Q_{r}$ in
terms of $r.$

\begin{proposition}
\label{thm:cube} For an $r$-dimensional hypercube we have
\[
\mathrm{gpn}(Q_{r})=2^{r-1}\cdot r!\sum_{j=0}^{r-1}\frac{1}{j!}+2^{r}%
\sim2^{r-1}\cdot r!\cdot e+2^{r}.
\]

\end{proposition}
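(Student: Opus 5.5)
The plan is a direct count that exploits vertex-transitivity of $Q_{r}$. Identify $V(Q_{r})$ with $\{0,1\}^{r}$, two vertices adjacent when they differ in exactly one coordinate. For vertices $u,v$ with Hamming distance $d=\mathrm{dist}(u,v)$, a $(u,v)$-geodesic is a path of length $d$ in which each step flips one coordinate. A coordinate where $u$ and $v$ agree is flipped an even number of times. A coordinate where they differ is flipped an odd number of times. Since the total number of flips is $d$, each of the $d$ differing coordinates is flipped exactly once and no other coordinate is flipped. Geodesics therefore correspond bijectively to the orderings of the $d$ differing coordinates, so $\mathrm{gpn}(u,v)=d!$.

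Next I count pairs. Fix $u$. The number of vertices $v$ at distance $d$ from $u$ is $\binom{r}{d}$, so
\[
\mathrm{gpn}(u)=\sum_{d=0}^{r}\binom{r}{d}d!=\sum_{d=0}^{r}\frac{r!}{(r-d)!}=r!\sum_{j=0}^{r}\frac{1}{j!},
\]
substituting $j=r-d$. This value is independent of $u$, as vertex-transitivity predicts. Using the identity from the proof of Theorem \ref{thm:bound}, $\mathrm{gpn}(G)=\frac{1}{2}\sum_{u}\mathrm{gpn}(u)+\frac{n}{2}$, where $\mathrm{gpn}(u)$ includes the trivial term $\mathrm{gpn}(u,u)=1$, I get
\[
\mathrm{gpn}(Q_{r})=2^{r-1}r!\sum_{j=0}^{r}\frac{1}{j!}+2^{r-1}.
\]
The $j=r$ term contributes $2^{r-1}$, which is exactly what turns this into the stated form $2^{r-1}r!\sum_{j=0}^{r-1}\frac{1}{j!}+2^{r}$. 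The only care needed in this step is the bookkeeping of the zero-length geodesics.

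For the asymptotic statement I use $\sum_{j=0}^{r-1}1/j!\to e$. The error is $\sum_{j\ge r}1/j!=O(1/r!)$, so $2^{r-1}r!$ times the error is $O(2^{r})$, which is lower order compared with $2^{r-1}r!\,e$. Hence $\mathrm{gpn}(Q_{r})\sim 2^{r-1}r!\,e+2^{r}$ in the sense intended.

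The main (though modest) obstacle is the bijection between geodesics and orderings, in particular arguing that no geodesic flips a coordinate twice. The parity-and-length argument in the first paragraph settles this, and everything after it is routine summation.
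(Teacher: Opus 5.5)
Your proof is correct and follows essentially the same route as the paper: each vertex has $\binom{r}{d}$ vertices at distance $d$, and $\mathrm{gpn}(u,v)=d!$ via orderings of the $d$ differing coordinates. Summing per vertex including $d=0$ and then applying $\mathrm{gpn}(G)=\frac{1}{2}\sum_{u}\mathrm{gpn}(u)+\frac{n}{2}$ is just a rearrangement of the paper's count of $\frac{1}{2}\binom{r}{d}2^{r}$ unordered pairs for $d\geq 1$ plus $2^{r}$ trivial geodesics, and your parity argument makes rigorous the claim that each differing coordinate is flipped exactly once, which the paper only asserts.
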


\begin{proof}
Let $u,v\in V(Q_{r})$ such that $u\not =v.$ Denote $\mathrm{dist}_{Q_{r}%
}(u,v)=d.$ Notice that $d$ can take its values from the set $\{1,\ldots,r\}.$
We first want to establish the number of pairs $u$ and $v$ at distance $d.$
Notice that $u$ can be chosen in $2^{r}$ distinct ways. Vertices at distance
$d$ from $u$ have precisely $d$ coordinates distinct than those in $u$ and the
remaining $r-d$ coordinates are the same. Hence, there are as many vertices
$v$ at distance $d$ from $u$ as there are distinct choices of $d$ coordinates
in $u,$ i.e. this number is $\binom{r}{d}.$ Notice that we counted each pair
$u,v\in V(Q_{r})$ at distance $d$ twice, hence there are $\frac{1}{2}\binom
{r}{d}2^{r}$ such pairs.

Next, we wish to establish the number of geodesics between a pair of vertices
$u,v\in V(Q_{r})$ with $\mathrm{dist}_{Q_{r}}(u,v)=d\geq1.$ Recall that $u$
and $v$ differ in precisely $d$ coordinates out of $r$ coordinates in total.
In every geodesic $P$ connecting $u$ and $v,$ precisely one coordinate changes
between two neighbors of $P.$ Hence, by choosing the order of changing the $d$
distinct coordinates from the value of $u$ to the value of $v,$ we have chosen
one geodesic. This implies that $\mathrm{gpn}(u,v)=d!.$

Recall that we counted only the pairs of distinct vertices $u$ and $v,$ i.e.
geodesics of length at least one. The number $2^{r}$ of geodesics of length
zero also has to be added into the sum, so we conclude that
\begin{align*}
\mathrm{gpn}(Q_{r})  &  =2^{r}+\sum_{d=1}^{r}\frac{1}{2}\binom{r}{d}2^{r}\cdot
d!=2^{r}+\sum_{d=1}^{r}2^{r-1}\frac{r!}{(r-d)!}\\
&  \sim2^{r-1}\cdot r!\cdot e+2^{r}%
\end{align*}
and the claim is established.
\end{proof}

We wish to compare the geodesic subpath number of the hypercube $Q_{r}$ with
that of $G_{k,t}.$ For that purpose, we first express the geodesic subpath
number of the hypercube $Q_{r}$ in terms of $n.$ Notice that $r=\log_{2}(n),$
so we get%
\[
\mathrm{gpn}(Q_{r})\sim\frac{ne}{2}\left(  \frac{r}{e}\right)  ^{r}=\frac
{ne}{2}\cdot3^{\log_{2}(n)(\log_{3}(\log_{2}(n))-\log_{3}(e))}\ll3^{n/3}.
\]
It is now obvious that the geodesic subpath number of the hypercube $Q_{r}$ is
asymptotically much smaller than the geodesic subpath number of $G_{k,t}.$

Next, we study grids and determine their geodesic subpath number, which
appears to be large and as such a candidate for the extremal graph. Let
$R_{r,s}$ denote the Cartesian product of paths $P_{r}$ and $P_{s},$ i.e.
$R_{r,s}=P_{r}\square P_{s}.$ Notice that the grid $R_{r,s}$ has $n=rs$
vertices. Edges of $R_{r,s}$ which belong to copies of $P_{r}$ (resp. $P_{s}$)
are called \emph{horizontal} (resp. \emph{vertical}). In the next theorem, we
establish the formula for the exact value of the geodesic subpath number of
the grid $R_{r,s}$ in terms of $r$ and $s.$

\begin{proposition}
\label{thm:grid} For the grid $R_{r,s}$ we have
\[
\mathrm{gpn}(R_{r,s})=2\binom{r+s+2}{r+1}-2r-2s-4-\frac{rs}{2}%
\big(r+s+4\big).
\]

\end{proposition}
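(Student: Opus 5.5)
The plan is to classify pairs of vertices of $R_{r,s}$ by their horizontal and vertical offsets, count geodesics for each offset, and evaluate the resulting double sum with the hockey-stick identity.

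\textbf{Setup and geodesics for a fixed offset.} Label the vertices of $R_{r,s}$ by pairs $(i,j)$ with $1\le i\le r$, $1\le j\le s$. For two vertices with offsets $a=|i-i'|$ and $b=|j-j'|$, the distance is $a+b$. A geodesic uses exactly $a$ horizontal and $b$ vertical edges, all pointing monotonically towards the target. Each geodesic is therefore determined by the order of these steps, so $\mathrm{gpn}(u,v)=\binom{a+b}{a}$.

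\textbf{Counting pairs for each offset.} For fixed $(a,b)$ with $0\le a\le r-1$ and $0\le b\le s-1$, the number of unordered pairs is as follows:
\begin{itemize}
\item if $a,b>0$, it is $2(r-a)(s-b)$, since the two vertices can lie along either diagonal orientation;
\item if exactly one of $a,b$ is $0$, it is $(r-a)(s-b)$;
\item the term $(a,b)=(0,0)$ contributes $rs$, which accounts exactly for the $n=rs$ geodesics of length zero.
\end{itemize}
Hence, with
\[
S=\sum_{a=0}^{r-1}\sum_{b=0}^{s-1}(r-a)(s-b)\binom{a+b}{a},
\]
we get $\mathrm{gpn}(R_{r,s})=2S-T$. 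Here $T$ corrects the double counting on the axes $a=0$ or $b=0$, where all binomials equal $1$:
\[
T=r\sum_{b}(s-b)+s\sum_{a}(r-a)-rs=\tfrac{rs(r+s)}{2}.
\]

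\textbf{Evaluating $S$ (the main step).} Write $(r-a)(s-b)$ as the number of pairs $(A,B)$ with $a\le A\le r-1$ and $b\le B\le s-1$, and swap the order of summation. The two-dimensional hockey-stick identity gives
\[
\sum_{a\le A}\sum_{b\le B}\binom{a+b}{a}=\binom{A+B+2}{A+1}-1,
\]
which follows by applying the one-dimensional identity twice. Summing over $A,B$ and shifting $A'=A+1$, $B'=B+1$ gives
\[
S=\sum_{A'=1}^{r}\sum_{B'=1}^{s}\binom{A'+B'}{A'}-rs.
\]
Apply the same identity once more to the full rectangle $0\le A'\le r$, $0\le B'\le s$, whose sum is $\binom{r+s+2}{r+1}-1$. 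Then subtract the row $A'=0$ and the column $B'=0$, which are all ones, adding back the corner counted twice. This gives
\[
S=\binom{r+s+2}{r+1}-r-s-2-rs.
\]

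\textbf{Conclusion.} Substituting,
\[
2S-T=2\binom{r+s+2}{r+1}-2r-2s-4-2rs-\tfrac{rs(r+s)}{2},
\]
which equals the stated formula since $2rs+\tfrac{rs(r+s)}{2}=\tfrac{rs}{2}(r+s+4)$.

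The main obstacle is the bookkeeping in evaluating $S$: getting the factor $2$ for pairs off the axes right, and handling the boundary terms of the iterated hockey-stick identity. I would sanity-check the final formula on $R_{1,1}$ and $R_{2,2}=C_4$. For $C_4$, there are $4$ vertices, $4$ edges, and $2$ antipodal pairs with $2$ geodesics each, giving $12$. The formula gives $2\binom{6}{3}-12-16=12$, as required.
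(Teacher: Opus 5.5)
Your proof is correct and is essentially the paper's computation with different bookkeeping. The paper fixes a vertex $u$, splits the grid into four quadrants around it, uses reflections to reduce to the quadrant $K_1(u)$ (where $\mathrm{gpn}(u,K_1(u))=\binom{a+b}{a}-1$ by the hockey-stick identity), and corrects for the overlaps along the row and column through $u$. Your offset classification with multiplicity $2(r-a)(s-b)$ replaces that symmetry argument: after your swap of summation, $S$ is exactly $\sum_{u}\mathrm{gpn}(u,K_1(u))$ and $T=\frac{rs}{2}(r+s)$ is exactly the paper's overlap correction, so both arguments finish with the identity $\sum_{a=1}^{r}\sum_{b=1}^{s}\binom{a+b}{a}=\binom{r+s+2}{r+1}-r-s-2$.
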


\begin{proof}
Let us consider a pair of vertices $u,v\in V(R_{r,s}).$ Let $u=(a,b)$ for
$1\leq a\leq r$ and $1\leq b\leq s$. For a given $u,$ we define $K_{i}(u)$ for
$i\in\{1,\ldots,4\}$ to be vertices of the four quadrants determined by $u,$
i.e.
\begin{align*}
K_{1}(u)  &  =\{(x,y):1\leq x\leq a\text{ and }1\leq y\leq b\},\\
K_{2}(u)  &  =\{(x,y):a\leq x\leq r\text{ and }1\leq y\leq b\},\\
K_{3}(u)  &  =\{(x,y):a\leq x\leq r\text{ and }b\leq y\leq s\},\\
K_{4}(u)  &  =\{(x,y):1\leq x\leq a\text{ and }b\leq y\leq s\}.
\end{align*}

Let us first consider the case when $v=(x,y)\in K_{1}(u).$ Traversing a
geodesic from $(a,b)$ to $(x,y)$, we have to use $a-x$ horizontal edges and
$b-y$ vertical edges. Since we can choose the edges in the above mentioned
copies in any order, we have $\mathrm{gpn}((a,b),(x,y))=\binom{a-x+b-y}{a-x}.$
Hence, the contribution to $\mathrm{gpn}(R_{r,s})$ of the pairs of vertices
from this case equals%
\begin{align*}
\mathrm{gpn}_{R_{r,s}}(u,K_{1}(u))  &  =\sum_{y=1}^{b}\sum_{x=1}^{a}%
\binom{a-x+b-y}{a-x}=\sum_{y=1}^{b}\binom{a+b-y}{a-1}\\
&  =\binom{a+b}{a}-\binom{a-1}{a-1}=\binom{a+b}{a}-1.
\end{align*}

Now, for any $u=(a,b)\in V(R_{r,s})$ we define $f_{2}(u)=(r+1-a,b),$
$f_{3}(u)=(r+1-a,s+1-b)$ and $f_{4}(u)=(a,s+1-b).$ Notice that
\begin{align*}
\mathrm{gpn}_{R_{r,s}}(u,K_{1}(u))  &  =\mathrm{gpn}_{R_{r,s}}(f_{2}%
(u),K_{2}(f_{2}(u)))=\mathrm{gpn}_{R_{r,s}}(f_{3}(u),K_{3}(f_{3}(u)))\\
&  =\mathrm{gpn}_{R_{r,s}}(f_{4}(u),K_{4}(f_{4}(u))).
\end{align*}
Since $f_{i}$ is a bijection for every $i\in\{2,3,4\},$ this implies
\[
\sum_{u\in V(R_{r,s})}\sum_{i=1}^{4}\mathrm{gpn}_{R_{r,s}}(u,K_{i}%
(u))=4\sum_{u\in V(R_{r,s})}\mathrm{gpn}_{R_{r,s}}(u,K_{1}(u)).
\]
Notice that
\[
\mathrm{gpn}_{R_{r,s}}(u,V(R_{r,s}))=\sum_{i=1}^{4}\mathrm{gpn}_{R_{r,s}%
}(u,K_{i}(u))-(r+s-2)-3,
\]
since quadrants $K_{i}(u)$ overlap in the copy of $P_{r}$ and $P_{s}$ which
pass through $u$ (and hence $u$ alone is in all $4$ quadrants). We conclude
that%
\[
\mathrm{gpn}(R_{r,s})=\frac{1}{2}\sum_{u\in V(R_{r,s})}\mathrm{gpn}_{R_{r,s}%
}(u,V(R_{r,s}))+\frac{rs}{2},
\]
where the coefficient $1/2$ before the sum comes from $(u,v)$-geodesics being
included twice in the sum, for $u\not =v,$ and the additional $rs/2$ comes
from $(u,u)$-geodesics being included in the sum only once. Given all the
considerations above, we obtain
\begin{align*}
\mathrm{gpn}(R_{r,s})  &  =\frac{1}{2}\sum_{u\in V(R_{r,s})}\left(  \sum
_{i=1}^{4}\mathrm{gpn}_{R_{r,s}}(u,K_{i}(u))-(r+s+1)\right)  +\frac{1}{2}rs\\
&  =\frac{1}{2}\sum_{u\in V(R_{r,s})}\sum_{i=1}^{4}\mathrm{gpn}_{R_{r,s}%
}(u,K_{i}(u))-\frac{rs}{2}(r+s+1)+\frac{1}{2}rs\\
&  =2\sum_{u\in V(R_{r,s})}\mathrm{gpn}_{R_{r,s}}(u,K_{1}(u))-\frac{rs}%
{2}(r+s)\\
&  =2\sum_{a=1}^{r}\sum_{b=1}^{s}\left(  \binom{a+b}{a}-1\right)  -\frac
{rs}{2}(r+s).
\end{align*}
Since%
\[
\sum_{a=1}^{r}\sum_{b=1}^{s}\binom{a+b}{a}=\binom{r+s+2}{r+1}-s-r-2,
\]
we obtain%
\[
\mathrm{gpn}(R_{r,s})=2\binom{r+s+2}{r+1}-2s-2r-4-2rs-\frac{rs}{2}(r+s)
\]
which yields the claim.
\end{proof}

In the case $r=s$ we get
\[
\mathrm{gpn}(R_{r,s})=2\binom{2r+2}{r+1}-4r-4-r^{2}(r+2).
\]
So
\[
\mathrm{gpn}(R_{r,s})\sim2\binom{2\sqrt{n}+2}{\sqrt{n}+1}\sim\frac{2}%
{\sqrt{\pi\sqrt{n}}}\cdot2^{2\sqrt{n}}\ll\Big(\frac{3}{2}\Big)^{2}\cdot
3^{n/3}.
\]
This means that the geodesic subpath number of $R_{r,s}$ is also significantly
smaller than the geodesic subpath number of $G_{3,n/3}.$

In this section we have shown that the geodesic subpath number of graphs
$G_{3,n/3}$ is quite close to the upper bound from Theorem \ref{thm:bound},
but they are not the same. Hence, in order to establish extremal graphs one
has either to lower the bound of Theorem \ref{thm:bound}, or to find graphs
with larger values of the geodesic path number than $G_{3,n/3}$, or both.

\section{Extremal cacti}

Besides considering extremal graphs among all simple connected graphs on $n$
vertices, one may consider extremal graphs on some particular graph classes.
Trees, which are the usual subclass to be considered, are not particularly
interesting since all trees are geodetic graphs, so Observation
\ref{Observation_geodetic} applies. The next class which is usually considered
are unicyclic graphs or, more broadly, cacti. A \emph{cactus graph} is a graph
in which all cycles are pairwise edge disjoint. Let $\mathcal{C}_{n,k}$ be the
class of all cactus graphs on $n$ vertices with $k$ cycles. We wish to
establish extremal cacti in the class $\mathcal{C}_{n,k}.$

Let us first consider the minimal cacti with respect to the subpath number in
the class $\mathcal{C}_{n,k}.$ Notice that any pair of vertices on an odd
length cycle is connected by precisely one shortest path on the cycle. On the
other hand, on the cycle of even length there exist at least two pairs of
vertices which are connected by two shortest paths on the cycle. This easily
yields the following observation.

\begin{observation}
For a cactus graph $G\in\mathcal{C}_{n,k}$ it holds that $\mathrm{gpn}%
(G)\geq\binom{n+1}{2}$ with equality if and only if each cycle in $G$ is of
odd length.
\end{observation}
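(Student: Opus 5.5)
By Observation \ref{Observation_geodetic}, every connected graph on $n$ vertices satisfies $\mathrm{gpn}(G)\geq\binom{n+1}{2}$, with equality if and only if $G$ is geodetic. The inequality therefore needs no further work. It remains to prove that a cactus $G\in\mathcal{C}_{n,k}$ is geodetic if and only if every cycle of $G$ has odd length. I plan to reduce this to a statement about single cycles, using the block structure of cacti.

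\emph{Key structural step.} Let $u,v\in V(G)$. Every $(u,v)$-path, and in particular every $(u,v)$-geodesic, passes through the same sequence of cut vertices and blocks; this holds in any connected graph via its block-cut tree. In a cactus each block is either a bridge or a cycle. Write this sequence as $u=x_{0},x_{1},\dots,x_{m}=v$, where consecutive $x_{j-1},x_{j}$ lie in a common block $B_{j}$. Then a $(u,v)$-geodesic is exactly a concatenation of $(x_{j-1},x_{j})$-geodesics taken inside the blocks $B_{j}$, since the distance inside a block equals the distance in $G$. This gives
\[
\mathrm{gpn}_{G}(u,v)=\prod_{j=1}^{m}\mathrm{gpn}_{B_{j}}(x_{j-1},x_{j}).
\]

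\emph{Cycle computation.} A bridge contributes the factor $1$. In a cycle $C_{\ell}$, two vertices at distance $d$ are joined by two paths along the cycle, of lengths $d$ and $\ell-d$. They have two geodesics exactly when $d=\ell-d$, and this is possible only if $\ell$ is even and the vertices are antipodal. So an odd cycle always contributes the factor $1$. An even cycle has $\ell/2\geq2$ antipodal pairs, each with exactly $2$ geodesics.

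\emph{Both directions.} If all cycles are odd, every factor in the product is $1$. Hence $G$ is geodetic and equality holds. If some cycle is even, pick an antipodal pair $u,v$ on it. Here $m=1$ and $\mathrm{gpn}_{G}(u,v)=2$, so $G$ is not geodetic and the inequality is strict.

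The only step that needs care is the product formula. Its key point is that a geodesic cannot leave a block and come back, since it would then revisit a cut vertex. This is standard, but I would state it explicitly.
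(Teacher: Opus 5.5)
Your proof is correct and takes the same route as the paper: combine Observation~\ref{Observation_geodetic} with the fact that pairs of vertices on an odd cycle have a unique geodesic, while antipodal pairs on an even cycle have two. The paper only says this ``easily yields'' the result, and your block-by-block product formula for $\mathrm{gpn}_G(u,v)$ supplies the justification it leaves implicit.
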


The above observation resolves the question of minimal cacti in $\mathcal{C}%
_{n,k}$, so next we consider maximal cacti in $\mathcal{C}_{n,k}$ with respect
to the geodesic subpath number. First, in the next lemma we show that maximal
cacti from $\mathcal{C}_{n,k}$ do not contain an odd length cycle of length
$\geq5.$

\begin{lemma}
\label{Lemma_odd}Let $G\in\mathcal{C}_{n,k}$ be a cactus graph. If $G$
contains a cycle of odd length at least $5,$ then there exists $G^{\prime}%
\in\mathcal{C}_{n,k}$ with $\mathrm{gpn}(G^{\prime})>\mathrm{gpn}(G).$
\end{lemma}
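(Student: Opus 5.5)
We transform $G$ locally by replacing an odd cycle of length $2\ell+1\geq5$ with an even cycle of length $2\ell$ plus a pendant vertex. This keeps $n$ and $k$ fixed and keeps $G'$ a cactus.

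Let $C=c_0c_1\cdots c_{2\ell}c_0$ be an odd cycle of length $2\ell+1\geq 5$ in $G$. For each $i$, let $T_i$ be the component containing $c_i$ after the edges of $C$ are removed, so $V(G)$ is the disjoint union of the $V(T_i)$. The first fact we use is standard for cacti. For $x\in T_i$ and $y\in T_j$ with $i\neq j$, every $(x,y)$-geodesic passes through $c_i$ and $c_j$, so
\[
\mathrm{gpn}_G(x,y)=\mathrm{gpn}_{T_i}(x,c_i)\cdot \mathrm{gpn}_C(c_i,c_j)\cdot \mathrm{gpn}_{T_j}(c_j,y).
\]
On an odd cycle $\mathrm{gpn}_C(c_i,c_j)=1$. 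Pairs inside a single $T_i$ are not affected by anything done to $C$, as long as the geodesics in $T_i$ do not use $C$, which holds in a cactus.

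\textbf{Construction.} Let $G'$ be obtained from $G$ by deleting the vertex $c_{2\ell}$ from the cycle, adding the edge $c_{2\ell-1}c_0$, and re-attaching $c_{2\ell}$ as a pendant vertex adjacent to $c_0$. The tree part $T_{2\ell}$ hanging at $c_{2\ell}$ stays attached to $c_{2\ell}$. Now $C'=c_0c_1\cdots c_{2\ell-1}c_0$ is an even cycle of length $2\ell\geq4$, and $G'$ is still a cactus with $n$ vertices and $k$ cycles.

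A cleaner option may be to choose $c_{2\ell}$ to be a vertex of $C$ whose $T_{2\ell}$ is as small as possible. Alternatively, we could pick the modification so that $T_{2\ell}$ is moved intact. What the comparison requires is that every pair's gpn in $G'$ is at least its value in $G$, and at least one pair strictly increases.

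\textbf{Comparison.} Pairs inside a single $T_i$ keep their gpn. For a pair with $x\in T_i$, $y\in T_j$ and $i,j\leq 2\ell-1$, the middle factor becomes $\mathrm{gpn}_{C'}(c_i,c_j)\in\{1,2\}$, which is at least $1$. For a pair involving $T_{2\ell}$, the geodesic now runs through $c_{2\ell}c_0$ and then along $C'$ from $c_0$. Its middle factor is $\mathrm{gpn}_{C'}(c_0,c_j)\geq1$, and the tree factors $\mathrm{gpn}_{T}(\cdot,\cdot)$ are unchanged. So no pair decreases. The pair $c_0,c_\ell$ is antipodal on $C'$, and its count goes from $1\cdot(\ldots)$ to $2\cdot(\ldots)$, a strict increase. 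Hence $\mathrm{gpn}(G')>\mathrm{gpn}(G)$.

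The main obstacle is justifying the product formula in $G'$ for pairs involving the re-attached $T_{2\ell}$. Because geodesics from $T_{2\ell}$ now pass through $c_0$, one must check that the tree factors are exactly the old ones. This holds because $T_{2\ell}$ together with the edge $c_{2\ell}c_0$ forms a block structure in which the path from $c_{2\ell}$ to $c_0$ is a bridge. The remaining care is the bookkeeping that geodesics inside each $T_i$ never use $C$ or $C'$, which follows from the cactus block structure.
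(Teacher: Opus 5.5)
Your proposal is correct and takes the paper's approach: the paper deletes $v_1v_g$ and adds $v_2v_g$, which makes $v_1v_2$ a bridge, and with $v_1=c_{2\ell}$, $v_2=c_0$, $v_g=c_{2\ell-1}$ this is your transformation; your product-formula argument (every middle factor stays $\geq 1$, while the pair $c_0,c_\ell$, now antipodal on $C'$, goes from $1$ to $2$) is the same comparison the paper makes, stated more uniformly. One small wording fix: the pieces $T_i$ need not be trees in a cactus, but your argument never uses acyclicity, so nothing breaks.
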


\begin{proof}
Assume that $G\in\mathcal{C}_{n,k}$ contains a cycle $C=v_{1}\cdots v_{g}%
v_{1}$ of length $g,$ where $g\geq5$ is odd. Let $G^{\prime}$ be a cactus
graph obtained from $G$ by removing the edge $v_{1}v_{g}$ and inserting the
edge $v_{2}v_{g}$ instead. Notice that $G^{\prime}$ contains the cycle
$C^{\prime}=v_{2}\cdots v_{g}v_{2}$ instead of $C$ and the edge $v_{1}v_{2}$
becomes a bridge in $G^{\prime}$. Denote by $G_{i}$ the connected component of
$G\backslash E(C)$ which contains $v_{i}.$ The length of the cycle $C^{\prime
}$ in $G^{\prime}$ is $g-1,$ which is even since $g$ is odd. This implies that
for $2\leq i<j\leq g,$ it holds that $\mathrm{gpn}_{G}(u,v)<\mathrm{gpn}%
_{G^{\prime}}(u,v)$ if $u\in V(G_{i})$ and $v\in V(G_{j})$ and $j-i=(g-1)/2.$
Also, $\mathrm{gpn}_{G}(u,v)<\mathrm{gpn}_{G^{\prime}}(u,v)$ when $u\in
V(G_{1})$ and $v\in V(G_{(g+3)/2})$. In all the remaining cases it holds that
$\mathrm{gpn}_{G}(u,v)=\mathrm{gpn}_{G^{\prime}}(u,v).$ Hence, the value of
$\mathrm{gpn}(u,v)$ either increases from $G$ to $G^{\prime}$ or remains the
same. Since there exists at least one pair of vertices for which
$\mathrm{gpn}(u,v)$ increases from $G$ to $G^{\prime},$ that being the pair
$v_{2}$ and $v_{(g+3)/2},$ it follows that $\mathrm{gpn}(G^{\prime
})>\mathrm{gpn}(G)$ as claimed.
\end{proof}

After we have addressed the odd length cycles in the previous lemma, we next
wish to consider the even length cycles. For that purpose, we need the notion
of an antipodal pair of vertices. An \emph{antipodal pair} of vertices on an
even length cycle is a pair of two vertices of the cycle which are
end-vertices of two distinct subpaths of the cycle of equal length. Now, a
cycle of length $k$ is also calles a $k$\emph{-cycle}. In particular,
$3$-cycle is called a \emph{triangle} and $4$-cycle a \emph{square}. A cactus
graph is \emph{girth restricted} if every cycle of $G$ is either a triangle or
a square.

\begin{figure}[h]
\begin{center}%
\begin{tabular}
[t]{ll}%
a) & \raisebox{-0.9\height}{\includegraphics[scale=0.5]{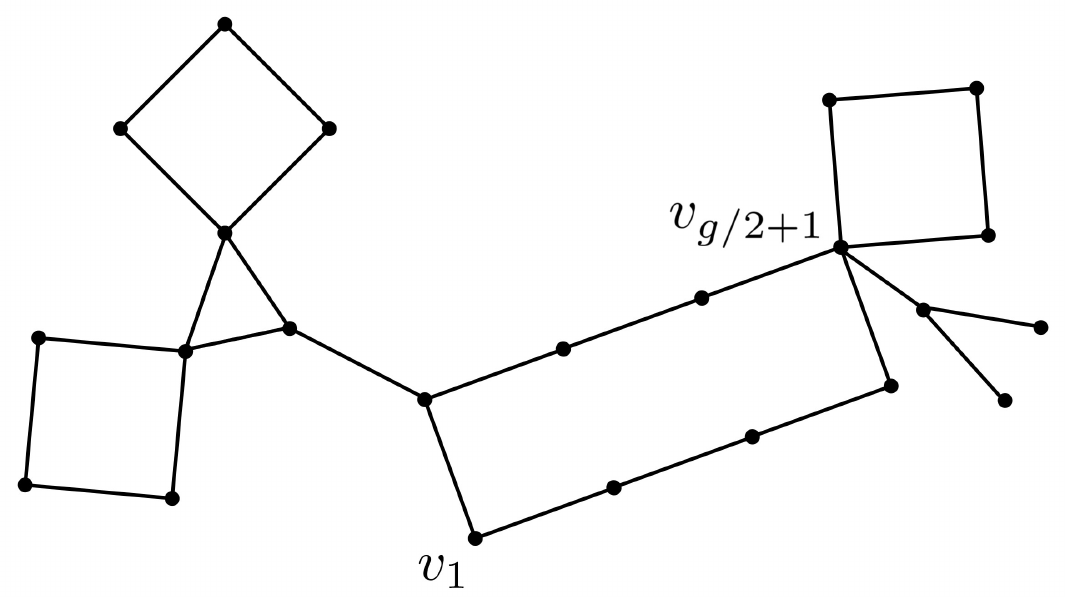}}\\
b) & \raisebox{-0.9\height}{\includegraphics[scale=0.5]{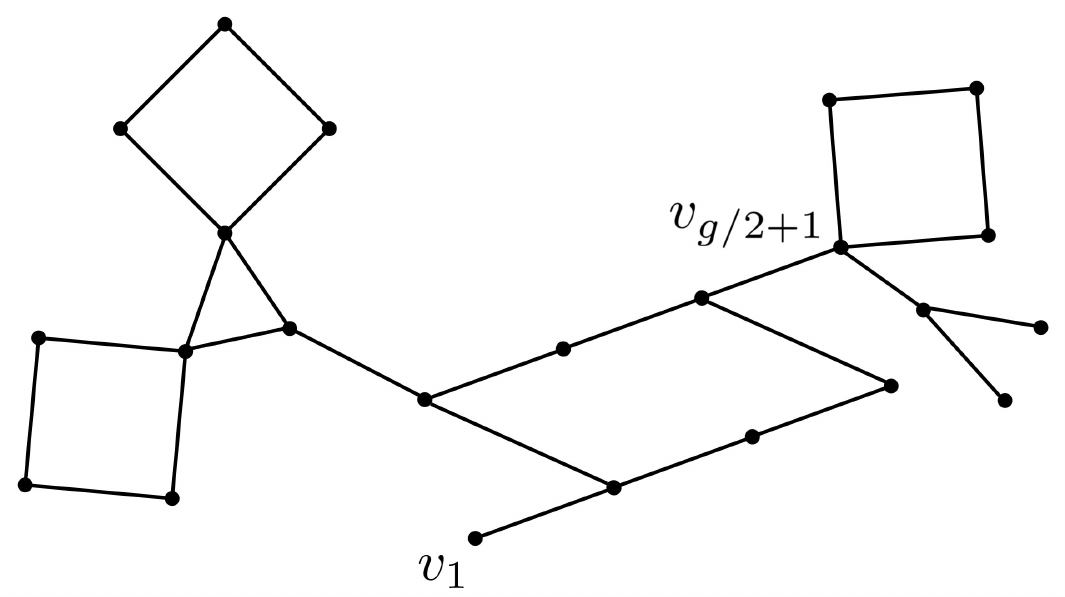}}
\end{tabular}
\end{center}
\caption{The figure shows: a) a cactus graph which is not girth restricted
since it contains a cycle of length $6,$ b) a girth restricted graph
$G^{\prime}$ obtained from $G$ by applying the transformation from the proof
of Lemma \ref{Lemma_girthRestricted}.}%
\label{Fig_girthRestricted}%
\end{figure}

\begin{lemma}
\label{Lemma_girthRestricted}Let $G\in\mathcal{C}_{n,k}$ be a cactus graph. If
$G$ is not girth restricted, then there exists a girth restricted $G^{\prime
}\in\mathcal{C}_{n,k}$ with $\mathrm{gpn}(G^{\prime})>\mathrm{gpn}(G).$
\end{lemma}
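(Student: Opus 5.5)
I would argue by induction on the quantity $\sum_{C}(|C|-4)^{+}$ over all cycles $C$ of $G$ of length at least $5$. The aim is to show that any cycle of length at least $5$ can be shortened while keeping the graph in $\mathcal{C}_{n,k}$ and strictly increasing $\mathrm{gpn}$. Iterating this produces a girth restricted $G'$, and since each step increases $\mathrm{gpn}$, we get $\mathrm{gpn}(G')>\mathrm{gpn}(G)$.

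If $G$ has an odd cycle of length $g\ge 5$, Lemma~\ref{Lemma_odd} already supplies the step. Its transformation replaces the edge $v_1v_g$ by $v_2v_g$, turning $C$ into a $(g-1)$-cycle plus a pendant bridge. This stays in $\mathcal{C}_{n,k}$ and decreases the measure. So it remains to treat an even cycle $C=v_1\cdots v_{g}v_1$ with $g\ge 6$.

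For the even case I would use the analogous transformation, but shorten by two so that the parity (and hence the antipodal structure) is kept. Remove the edge $v_1v_g$ and insert $v_3v_g$. Then $C$ becomes the even cycle $C'=v_3\cdots v_gv_3$ of length $g-2$, and $v_1v_2v_3$ becomes a pendant path attached at $v_3$. Let $G_i$ be the component of $G\setminus E(C)$ containing $v_i$, as in the proof of Lemma~\ref{Lemma_odd}. For $u\in V(G_i)$ and $v\in V(G_j)$ we have $\mathrm{gpn}(u,v)=\mathrm{gpn}(v_i,v_j)\,\mathrm{gpn}(u,v_i)\,\mathrm{gpn}(v_j,v)$, and the last two factors are unchanged. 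So it suffices to compare $\mathrm{gpn}_G(v_i,v_j)$ with $\mathrm{gpn}_{G'}(v_i,v_j)$, each of which is $1$ or $2$. Two vertices of $C$ have $2$ geodesics in $G$ iff they are antipodal on $C$; in $G'$, vertices $v_i,v_j$ with $i,j\ge3$ have $2$ geodesics iff they are antipodal on $C'$. For $v_1$ and $v_2$ the count equals that of $v_3$ with the partner vertex.

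The main obstacle is that this is no longer monotone pair by pair, unlike the odd case. Some pairs antipodal on $C$, such as $(v_1,v_{g/2+1})$, may lose their second geodesic, while pairs antipodal on $C'$ gain one. So a weighted count is needed, with $w_i=|V(G_i)|$ and the change in $\mathrm{gpn}$ governed by sums of $w_iw_j$ over antipodal pairs. I would try first to exploit freedom in where to cut: choose the position on $C$, i.e. which two consecutive vertices get merged into the pendant path, so that the lost weight is minimized. Summing the change over all $g$ rotations should give a positive total, because $C$ has $g/2$ antipodal pairs while $C'$ has $(g-2)/2$ pairs but involves the heavier merged blocks. If this averaging is inconclusive, the fallback is a different construction. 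Contract the cycle onto a square by attaching the subgraphs $G_i$ to the four square vertices according to their position modulo~$4$, and spread the freed $g-4$ vertices as pendant paths. Then one verifies directly that every antipodal pair of $C$ is mapped to an antipodal pair of the square, so no pair decreases and at least one new pair of the square increases.
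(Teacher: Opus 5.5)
Your odd-cycle step and the factorization $\mathrm{gpn}(u,v)=\mathrm{gpn}(u,v_i)\,\mathrm{gpn}(v_i,v_j)\,\mathrm{gpn}(v_j,v)$ are fine. The weights must be $W_i=\mathrm{gpn}_G(v_i,V(G_i))$ rather than $|V(G_i)|$, since $G_i$ may contain squares. The even step, however, has a genuine gap. Collapsing three consecutive vertices $v_{r+1},v_{r+2},v_{r+3}$ keeps only one antipodal pair of $C$, namely $(v_{r+2},v_{r+2+g/2})$. Every other antipodal pair loses its second geodesic, and the gains come from $g/2$ pairs at cyclic distance $g/2\pm1$.

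No choice of where to cut rescues this, so averaging over rotations cannot either. Take $C_8$ with $M-1$ pendant leaves at each of $v_1,v_3,v_5,v_7$, so $W_i=M$ for odd $i$ and $W_i=1$ for even $i$. Whichever three consecutive vertices you collapse, at least one of the pairs $(v_1,v_5)$, $(v_3,v_7)$, each of weight $M^2$, is destroyed, along with two further antipodal pairs. The four gained pairs all have odd cyclic distance ($3$ or $5$), so each has weight $M$. Hence the change in $\mathrm{gpn}$ is at most $4M-M^2-2<0$ for every cut once $M\ge 4$.

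Your fallback fails as stated too. Placing $v_i$ at square vertex $i \bmod 4$ sends $(v_i,v_{i+g/2})$ to antipodal square vertices only when $g\equiv 4\pmod 8$. For $g=6$ these pairs land on adjacent square vertices, and for $g=8$ on the same vertex. Either way they lose their factor $2$, so the claimed pairwise monotonicity does not hold.

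The missing idea is to shorten $C$ symmetrically at two antipodal places, so that the antipodal matching is preserved exactly and no pair ever loses. The paper takes $G'=G-v_1v_g+v_2v_g-v_{g/2}v_{g/2+1}+v_{g/2}v_{g/2+2}$. The antipodal vertices $v_1$ and $v_{g/2+1}$ become pendant at $v_2$ and $v_{g/2+2}$, which are antipodal on the new $(g-2)$-cycle $C'$. Each of the two arcs between any other antipodal pair of $C$ contains exactly one of $v_1,v_{g/2+1}$. Therefore every antipodal pair of $C$ stays antipodal on $C'$, and $(v_1,v_{g/2+1})$ keeps both geodesics. In addition, $(v_1,v_{g/2+2})$ and $(v_2,v_{g/2+1})$ gain a second geodesic. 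So $\mathrm{gpn}$ is non-decreasing pair by pair and strictly increases. Iterating this, together with Lemma~\ref{Lemma_odd}, reaches a girth restricted graph. A contraction onto a square can only work if it has this same antipodal symmetry, not a placement by index modulo $4$.
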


\begin{proof}
Due to Lemma \ref{Lemma_odd}, we may assume that all cycles of length at least
$5$ in $G$ are of even length. Assume that $G$ is not girth restricted. This
means that $G$ contains a cycle $C=v_{1}v_{2}\cdots v_{g}v_{1}$ where $g\geq5$
is even. Let $G^{\prime}=G-v_{1}v_{g}+v_{2}v_{g}-v_{g/2}v_{g/2+1}%
+v_{g/2}v_{g/2+2}$, and this transformation of $G$ into $G^{\prime}$ is
illustrated by Figure \ref{Fig_girthRestricted}. Notice that $G^{\prime}$
contains all cycles of $G$ except for the cycle $C.$ Instead of $C$, which is
of length $g,$ the graph $G^{\prime}$ contains the cycle $C^{\prime}%
=v_{2}\cdots v_{g/2}v_{g/2+2}\cdots v_{g}v_{2}$, which is of length $g-2.$ We
wish to show that $\mathrm{gpn}(G^{\prime})>\mathrm{gpn}(G).$

Let $G_{i}$ denote the connected component of $G-E(C)$ which contains $v_{i}.$
Let $u,v\in V(G)$ be a pair of vertices in $G.$ If both $u$ and $v$ belong to
the same component $G_{i},$ then $\mathrm{gpn}_{G^{\prime}}(u,v)=\mathrm{gpn}%
_{G}(u,v).$ So, let us assume that $u$ and $v$ belong to two distinct
connected components, i.e. $u\in V(G_{i})$ and $v\in V(G_{j})$ where we may
assume $i<j.$ Let us first consider the case of $(i,j)\in
\{(1,g/2+2),(2,g/2+1)\}.$ Notice that $\mathrm{gpn}_{G}(v_{i},v_{j})=1$ and
$\mathrm{gpn}_{G^{\prime}}(v_{i},v_{j})=2.$ Due to this, we have
$\mathrm{gpn}_{G^{\prime}}(u,v)>\mathrm{gpn}_{G}(u,v).$ Let us next consider
the case of $(i,j)=(1,g/2+1).$ Since $\mathrm{gpn}_{G}(v_{i},v_{j}%
)=\mathrm{gpn}_{G}(v_{i},v_{j})=2,$ it follows that $\mathrm{gpn}_{G^{\prime}%
}(u,v)=\mathrm{gpn}_{G}(u,v).$ Let us finally consider all the remaining cases
of $(i,j).$ Notice that $v_{i}$ and $v_{j}$ is an antipodal pair on $C$ in $G$
if and only if it is an antipodal pair on $C^{\prime}$ in $G^{\prime}.$ This
implies $\mathrm{gpn}_{G}(v_{i},v_{j})=\mathrm{gpn}_{G}(v_{i},v_{j})$, and
consequently $\mathrm{gpn}_{G^{\prime}}(u,v)=\mathrm{gpn}_{G}(u,v).$

We conclude that $\mathrm{gpn}(u,v)$ either increases from $G$ to $G^{\prime}$
or it remains the same. Since there exists at least one pair of vertices for
which $\mathrm{gpn}(u,v)$ increases from $G$ to $G^{\prime},$ namely $v_{1}$
and $v_{g/2+2},$ we obtain $\mathrm{gpn}(G^{\prime})>\mathrm{gpn}(G)$ as claimed.
\end{proof}

Next, a vertex of a cactus graph is \emph{cyclic} if it belongs to a cycle of
$G.$ A cyclic vertex of $G$ is \emph{active} if $\deg_{G}(v)\geq3,$ otherwise
it is \emph{passive}. A square i.e. a $4$-cycle of $G$ is \emph{multiactive}
if it contains at least two active vertices. Also, we say that a square is
\emph{antipodal} if it contains precisely two active vertices which form an
antipodal pair. Now, an \emph{antipodal} cactus graph is any girth restricted
cactus graph in which every multiactive square is antipodal.

\begin{lemma}
\label{Lemma_antipodal}Let $G\in\mathcal{C}_{n,k}$ be a cactus graph. If $G$
is not antipodal, then there exists an antipodal cactus graph $G^{\prime}%
\in\mathcal{C}_{n,k}$ with $\mathrm{gpn}(G^{\prime})>\mathrm{gpn}(G).$
\end{lemma}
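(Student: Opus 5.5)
By Lemma \ref{Lemma_girthRestricted}, we may assume that $G$ is girth restricted; otherwise we first replace it by a girth restricted graph with larger $\mathrm{gpn}$. The plan is to run a finite sequence of local transformations. Each one keeps the graph in $\mathcal{C}_{n,k}$, keeps it girth restricted, does not decrease $\mathrm{gpn}(u,v)$ for any pair, and strictly increases it for at least one pair. Every step will also strictly decrease a simple potential, namely the number of pairs (square, active vertex on it) in which the square is not antipodal. The process therefore ends at an antipodal cactus $G^{\prime}$ with $\mathrm{gpn}(G^{\prime})>\mathrm{gpn}(G)$.

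Let $C=v_1v_2v_3v_4v_1$ be a multiactive square that is not antipodal, and let $G_i$ be the component of $G-E(C)$ containing $v_i$. Then $C$ either has two adjacent active vertices (say $v_1,v_2$) or has at least three active vertices. In either case we may choose an active vertex $v_j$ whose antipode $v_{j+2}$ is also active, or an active vertex adjacent to another active vertex. The key transformation is to detach the branches hanging at one active vertex and regraft them at its antipode. Concretely, if $v_1$ and $v_2$ are both active, move all edges of $G_2$ incident to $v_2$ so that they become incident to $v_4$ instead, merging $G_2$ into $G_4$ (the antipode of $v_2$). Cycles and the cactus property are preserved, since $G_2$ is simply reattached at a different cut vertex, so the number of vertices and cycles is unchanged and all other cycles keep their lengths. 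Repeat until at most two active vertices remain on $C$ and, if two remain, they are antipodal. This strictly reduces the potential defined above.

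The core of the proof is the pairwise comparison of $\mathrm{gpn}(u,v)$, done as in Lemma \ref{Lemma_girthRestricted}. A shortest path between vertices in different components passes through the cycle vertices $v_i,v_j$, so
\[
\mathrm{gpn}(u,v)=\mathrm{gpn}(u,v_i)\,\mathrm{gpn}_C(v_i,v_j)\,\mathrm{gpn}(v_j,v),
\]
and $\mathrm{gpn}_C(v_i,v_j)$ equals $2$ for antipodal $v_i,v_j$ and $1$ otherwise. Pairs inside one branch, or pairs not routed through $C$, are unaffected, because distances into the moved branch change only by a translation along $C$ and the factors outside $C$ stay the same. For $u\in G_2$ and $v\in G_4$ the factor drops from $2$ to $1$. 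For $u\in G_2$ and $v\in G_1\cup G_3$ the factor goes from $1$ to $1$, which is neutral. For $u\in G_2$ and $v\in G_2^{\text{other}}$, where the antipode is now $v_2$ itself, the factor rises from $1$ to $2$.

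The main obstacle is this bookkeeping. The merge makes the pairs $G_2\times G_4$ lose their factor $2$, while the pairs $G_2\times\{v_2\}$ gain one. That gain is only a single vertex, so it may not compensate. To fix this, I would instead move the branch to the antipode of the vertex whose branch is smaller. Even so, the net gain must be shown to be positive, using
\[
\sum_{u\in G_2}\mathrm{gpn}(u,v_2)\,\bigl(2\cdot\mathrm{gpn}\text{-mass of }G_{\text{new antipode}}-\ldots\bigr).
\]
If this fails, the fallback is to argue with the total mass: define $M_i=\sum_{u\in G_i}\mathrm{gpn}(u,v_i)$, note that the contribution of $C$ is $M_1M_3+M_2M_4$ (the antipodal products), and observe that for fixed $M_1+M_2+M_3+M_4$ this is maximized by concentrating all mass on one antipodal pair. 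Moving whole branches so that only two antipodal vertices carry branches then increases $M_1M_3+M_2M_4$ strictly. I expect this last convexity-type argument, together with checking that the masses $M_i$ behave additively under regrafting (the product formula above makes $M_{\text{merged}}=M_2+M_4-1$ roughly), to be the decisive step.
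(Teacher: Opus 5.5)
There is a genuine gap: the one transformation you actually specify never increases $\mathrm{gpn}$. Put $\mu_i=\sum_{u\in V(G_i)\setminus\{v_i\}}\mathrm{gpn}(u,v_i)$, which is your $M_i-1$. Summing your product formula over all pairs gives $\mathrm{gpn}(G)=K+\mu_1\mu_3+\mu_2\mu_4$. Here $K$ is unchanged by any rewiring that reattaches each branch $V(G_i)\setminus\{v_i\}$, as a whole, to some vertex of $C$.

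Moving the branch of $v_2$ to its antipode $v_4$ turns $\mu_1\mu_3+\mu_2\mu_4$ into $\mu_1\mu_3$, so the change is exactly $-\mu_2\mu_4\le 0$. This agrees with your own bookkeeping: you lose $\mu_2(1+\mu_4)$ on the pairs between the branch of $v_2$ and $V(G_4)$, and gain only $\mu_2$ on the pairs with $v_2$. Worse, the move does not even repair $C$. If $v_1,v_2$ are the only active vertices, afterwards the active vertices are $v_1$ and $v_4$, which are again adjacent. Then neither $\mathrm{gpn}$ nor your potential changes, and the procedure can cycle. The suggested fix (``the antipode of the vertex whose branch is smaller'') is not pinned down, and the displayed sum is left unfinished.

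The fallback uses the right invariant but skips the decisive step, and as written it is false. Not every regrafting that leaves branches on only one antipodal pair increases $\mu_1\mu_3+\mu_2\mu_4$. For $(\mu_1,\mu_2,\mu_3,\mu_4)=(10,10,10,1)$ the value is $110$. Moving the branches of $v_2,v_3$ to $v_1$ and that of $v_4$ to $v_3$ gives only $30\cdot 1=30$. Even if you read it as ``pass to a maximizing placement'', you still have to show that a non-antipodal configuration is not already maximal.

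The missing idea is the specific regrouping used in the paper. Take adjacent active vertices $v_2,v_3$, move all branches of $v_2$ to $v_1$, and move all branches of $v_4$ to $v_3$. The value becomes $(\mu_1+\mu_2)(\mu_3+\mu_4)=\mu_1\mu_3+\mu_2\mu_4+\mu_1\mu_4+\mu_2\mu_3$. This is a strict gain since $\mu_2,\mu_3>0$. The square $C$ then has exactly the antipodal active vertices $v_1,v_3$, and every other square keeps its pattern of active vertices. So one such step per bad square suffices.

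The paper verifies this pair by pair rather than through the identity, and there not every pair is monotone. For $x$ in the branch of $v_2$, $\mathrm{gpn}(x,v_3)$ doubles while $\mathrm{gpn}(x,v_4)$ halves, keeping the same sum. So your announced invariant that no $\mathrm{gpn}(u,v)$ decreases is also stronger than what holds for the correct move.
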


\begin{proof}
By Lemma \ref{Lemma_girthRestricted}, we may assume that $G$ is girth
restricted. Since $G$ is not antipodal, there exists at least one multiactive
square $C=v_{1}v_{2}v_{3}v_{4}v_{1}$ in $G$ which is not antipodal. By $G_{i}$
we denote the connected component of $G-E(C)$ which contains $v_{i}.$

Since $C$ is not antipodal, at least two neighboring vertices of $C$ are
active, say $v_{2}$ and $v_{3}.$ Let $w_{1},\ldots,w_{k}$ be all the neighbors
of $v_{2}$ which belong to the component $G_{2}.$ Since $v_{2}$ is active, we
know that $k\geq1$. Also, let $z_{1},\ldots,z_{q}$ be all the neighbors of
$v_{4}$ which belong to $G_{4}.$ Notice that it may happen that $q=0,$ i.e.
that $v_{4}$ does not have any neighbors in $G$ besides $v_{1}$ and $v_{3}.$
Let $G^{\prime}$ be the graph obtained from $G$ by doing the following rewiring:

\begin{itemize}
\item removing an edge $w_{i}v_{2}$ and inserting $w_{i}v_{1}$ instead for
every $i\in\{1,\ldots,k\};$

\item removing an edge $z_{i}v_{4}$ and inserting $z_{i}v_{3}$ instead for
every $i\in\{1,\ldots,q\}.$
\end{itemize}

Let us show that the number of shortest paths increases in this
transformation. Let $x$ and $y$ be a pair of vertices from $G.$ If neither $x$
nor $y$ belong to $(V(G_{2})\backslash\{v_{2}\})\cup(V(G_{4})\backslash
\{v_{4}\}),$ then the number of shortest paths connecting $x$ and $y$ remains
the same. Hence, let us assume $x\in V(G_{2})\backslash\{v_{2}\},$ and it is
sufficient to consider only this case since the case $x\in V(G_{4}%
)\backslash\{v_{4}\}$ is symmetric.

If $y\in V(G_{1}),$ then each shortest path $P$ in $G$ between $x$ and $y$
passes through the edge $v_{2}v_{1},$ i.e. its vertex sequence contains the
segment $w_{i}v_{2}v_{1}.$ The corresponding shortest path $P^{\prime}$ in
$G^{\prime}$ is obtained from $P$ by removing $v_{2}$ from the vertex
sequence. Hence, $\mathrm{gpn}_{G^{\prime}}(x,y)=\mathrm{gpn}_{G}(x,y).$

If $y\in V(G_{4})\backslash\{v_{4}\},$ then each shortest path $P$ in $G$
between $x$ and $y$ either contains the segment $v_{2}v_{1}v_{4}$ or the
segment $v_{2}v_{3}v_{4}.$ The corresponding shortest path $P^{\prime}$
between $x$ and $y$ in $G^{\prime}$ is obtained from $P$ by replacing this
segment by $v_{1}v_{2}v_{3}$ or the segment $v_{1}v_{4}v_{3},$ respectively.
We again conclude $\mathrm{gpn}_{G^{\prime}}(x,y)=\mathrm{gpn}_{G}(x,y).$

If $y\in V(G_{3})\backslash\{v_{3}\},$ then each shortest path $P$ between $x$
and $y$ in $G$ passes through the edge $v_{2}v_{3},$ i.e. contains the segment
$w_{i}v_{2}v_{3}.$ For each such path $P,$ the two corresponding shortest
paths $P^{\prime}$ and $P^{\prime\prime}$ can be obtained from $P$ by
replacing the segment $w_{i}v_{2}v_{3}$ with $w_{i}v_{1}v_{2}v_{3}$ and
$w_{i}v_{1}v_{4}v_{3},$ respectively. Hence, $\mathrm{gpn}_{G^{\prime}%
}(x,y)=2\mathrm{gpn}_{G}(x,y)>\mathrm{gpn}_{G}(x,y).$

Finally, considering the case when $y\in\{v_{3},v_{4}\},$ notice that
$\mathrm{gpn}_{G^{\prime}}(x,v_{3})=2\mathrm{gpn}_{G}(x,v_{3})$ and
$\mathrm{gpn}_{G^{\prime}}(x,v_{4})=\mathrm{gpn}_{G}(x,v_{4})/2.$ Also, it is
easy to observe that $\mathrm{gpn}_{G}(x,v_{4})=2\mathrm{gpn}_{G}(x,v_{3}).$
We conclude that
\begin{align*}
\mathrm{gpn}_{G^{\prime}}(x,v_{3})+\mathrm{gpn}_{G^{\prime}}(x,v_{4})  &
=2\mathrm{gpn}_{G}(x,v_{3})+\mathrm{gpn}_{G}(x,v_{4})/2\\
&  =\mathrm{gpn}_{G}(x,v_{4})+\mathrm{gpn}_{G}(x,v_{3}).
\end{align*}

We have now considered all the possible cases and concluded that the number of
shortest pairs between $x$ and $y$ either stays the same or increases from $G$
to $G^{\prime},$ i.e. $\mathrm{gpn}(G^{\prime})\geq\mathrm{gpn}(G).$ Moreover,
since $v_{2}$ and $v_{3}$ are active, there exists at least one pair of
vertices $x\in V(G_{2})\backslash\{v_{2}\}$ and $y\in V(G_{3})\backslash
\{v_{3}\}$ for which $\mathrm{gpn}_{G^{\prime}}(x,y)>\mathrm{gpn}_{G}(x,y).$
We conclude that $\mathrm{gpn}(G^{\prime})>\mathrm{gpn}(G).$

Repeating this transformation on all multiactive squares of $G$ which are not
antipodal yields the claim.
\end{proof}

\begin{figure}[h]
\begin{center}%
\begin{tabular}
[t]{ll}%
a) & \raisebox{-0.9\height}{\includegraphics[scale=0.6]{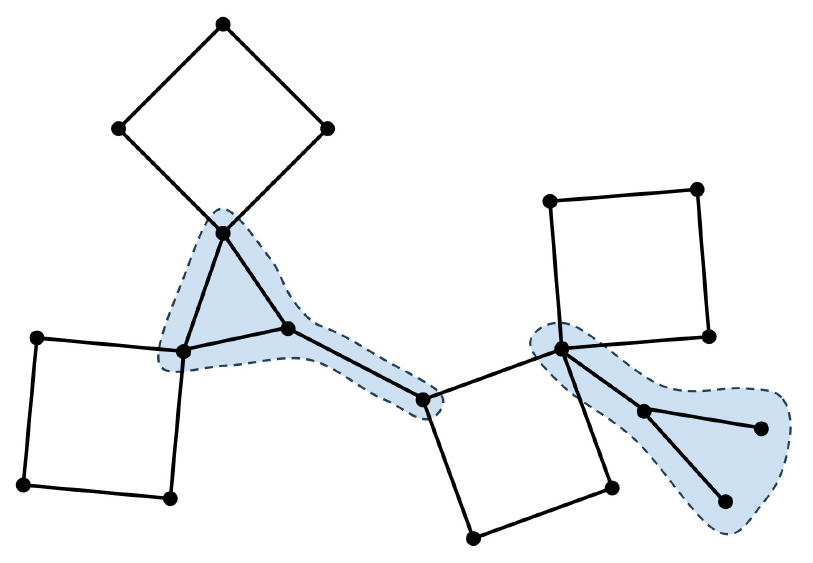}}\\
b) & \raisebox{-0.9\height}{\includegraphics[scale=0.6]{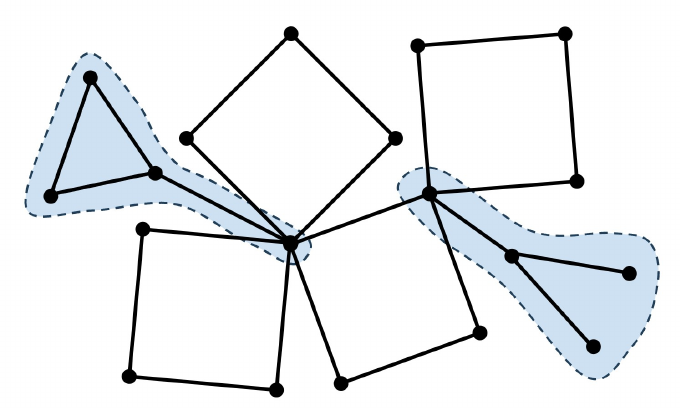}}
\end{tabular}
\end{center}
\caption{The figure shows two cactus graphs and the unipath structures of
these graphs are shaded. Both unipath structures of the graph in b) contain
only one squared vertex, but it belongs to more than one square. The graph a)
contains a unipath structure with more than one squared vertex. None of these
unipath structures is good.}%
\label{Fig_unpathStructures}%
\end{figure}

For an antipodal cactus graph $G\in\mathcal{C}_{n,k},$ let $E_{S}$ denote the
set of all edges of $G$ which belong to the squares. A \emph{unipath
structure} of $G$ is any non-trivial component of $G-E_{S}.$ Notice that for
any pair of vertices which belong to the same unipath structure of $G$, there
is only one shortest path in $G$ connecting them. A unipath structure is
\emph{good} if it contains at most one vertex $v$ which belongs to a square,
and then $v$ belongs to precisely one square. In other words, if we define a
\emph{squared vertex} to be a vertex which belongs to a square, then a unipath
structure is not good if it contains either at least two squared vertices or a
single squared vertex which belongs to at least two squares. Finally, a
\emph{unipath resolved} graph is an antipodal cactus graph in which every
unipath structure is good. An example of such a graph is given in Figure
\ref{Fig_resolved}.

\begin{figure}[h]
\begin{center}
\raisebox{-0.9\height}{\includegraphics[scale=0.6]{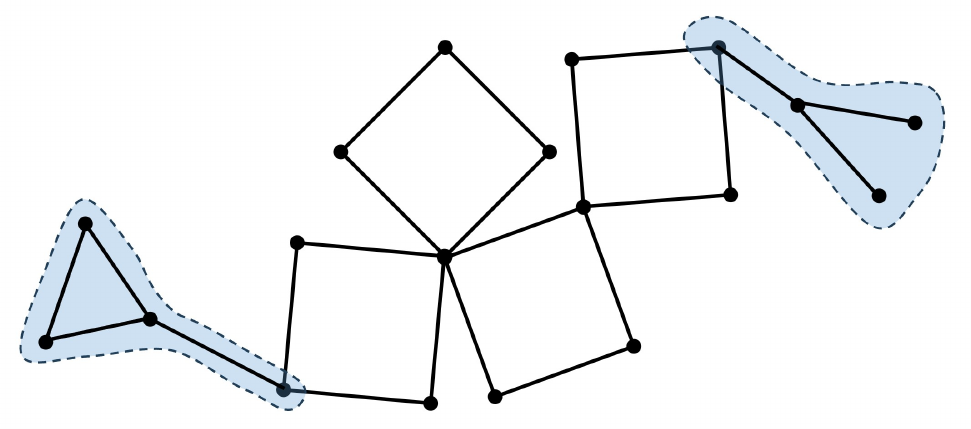}}
\end{center}
\caption{The figure shows a unipath resolved cactus graph. It contains two
unipath structures and both of them are good.}%
\label{Fig_resolved}%
\end{figure}

We will next narrow our search for maximal graphs to unipath resolved cacti,
but before we proceed we need the following notation. For $X,Y\subseteq V(G),$
we define $\mathrm{gpn}_{G}(X,Y)=\sum_{x\in X,y\in Y}\mathrm{gpn}_{G}(x,y).$
If $X=\{x\}$ is a singleton, we will write $\mathrm{gpn}_{G}(x,Y)$ instead of
$\mathrm{gpn}_{G}(\{x\},Y).$ Now, let us establish the following lemma.

\begin{lemma}
\label{Lemma_unipathResolved}Let $G\in\mathcal{C}_{n,k}$ be a cactus graph. If
$G$ is not a unipath resolved graph, then there exists a unipath resolved
graph $G^{\prime}\in\mathcal{C}_{n,k}$ with $\mathrm{gpn}(G^{\prime
})>\mathrm{gpn}(G).$
\end{lemma}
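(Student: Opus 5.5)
By Lemma \ref{Lemma_antipodal} we may assume that $G$ is antipodal, hence girth restricted. Every multiactive square of $G$ then has exactly two active vertices, and they form an antipodal pair. The plan is to find a local rewiring that keeps $G$ in $\mathcal{C}_{n,k}$ and antipodal, does not decrease any $\mathrm{gpn}(x,y)$, and strictly increases at least one. We then iterate this rewiring, together with Lemma \ref{Lemma_antipodal} if antipodality is ever lost. Each step strictly increases $\mathrm{gpn}$ and $\mathcal{C}_{n,k}$ is finite, so the process terminates, and the final graph must be unipath resolved.

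Suppose some unipath structure $U$ of $G$ is not good. Then $U$ contains either two squared vertices $s,s'$, or one squared vertex $s$ lying on two squares $C_1,C_2$. In the first case, choose $s,s'$ so that the unique $(s,s')$-path $P$ in $U$ has no other squared vertex in its interior, and let $C$ be a square at $s'$. We cut the subgraph $H$ hanging at $s'$ on the side of $C$ (that is, $C$ together with everything reached through $C$ from $s'$) and reattach it at $s$ by identifying $s'$ with $s$. This is the same kind of rerouting used in Lemma \ref{Lemma_antipodal}. In the second case we do the analogous move: we slide one square, together with its branch, away from $s$ along a tree edge of $U$ incident to $s$, or alternatively we merge the two squares' branches so that the antipodal structure is kept. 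The transformation is a sequence of edge moves, so the number of vertices and the number of cycles are unchanged, and every square stays a square.

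Next we compare $\mathrm{gpn}_G(x,y)$ and $\mathrm{gpn}_{G'}(x,y)$ for all pairs, using the notation $\mathrm{gpn}_G(X,Y)$. In a cactus, a shortest path passes through a sequence of blocks, and its count is the product of $\mathrm{gpn}$ over the antipodal square crossings. In $G$, a path from $H$ to the part of $G$ containing $s$ crosses $P$, which is a tree segment and contributes a factor $1$. After the move, the path from $H$ may enter the square(s) at $s$ directly. Pairs inside $H$ or inside the rest of the graph keep their counts, because distances inside each piece are unchanged. Cross pairs either keep the same set of squares they cross, or pick up an extra antipodal crossing, which doubles the count. The strictly increasing pairs are those from the far antipodal vertex of $C$ (or from its branch) to the far antipodal vertex of a square at $s$: such a path now crosses two squares consecutively with both crossings antipodal. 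Wherever a count could drop, for instance pairs for which $s$ and $s'$ were both relevant to squares, we will pair terms $x$ with $v_3,v_4$ as in the proof of Lemma \ref{Lemma_antipodal}, so that the total contribution $\mathrm{gpn}(x,Y)$ over a square does not drop.

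The main obstacle is this bookkeeping. We must check that reattaching $H$ does not create a multiactive square that fails to be antipodal, and that no pair loses more geodesics than others gain. To handle the first issue, we will attach at a vertex that is already active, so that the squares at $s$ keep exactly their antipodal active pair. For the second, we will compare $\mathrm{gpn}(X,Y)$ block by block, using the multiplicative structure of geodesic counts in cacti. If a pure inequality per pair fails, we fall back to the aggregated comparison $\mathrm{gpn}_{G'}(x,Y)\geq \mathrm{gpn}_G(x,Y)$ over a square's vertex set, as was done for $v_3,v_4$ in Lemma \ref{Lemma_antipodal}.
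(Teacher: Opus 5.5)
There is a genuine gap, and it sits exactly where the strict increase has to come from.

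\textbf{Your Case 1 move increases nothing.} In a cactus, $\mathrm{gpn}(x,y)$ is the product, over the blocks traversed, of the number of geodesics inside each block between its entry and exit vertices. The tree segment $P$ contributes $1$. After your move, $C$ is still entered at the same vertex of $C$ (formerly $s'$, now $s$), so every crossing pattern is unchanged. For instance, the pair you single out (the antipode of $s'$ on $C$ and the antipode of $s$ on a square at $s$) has $2\cdot 2=4$ geodesics both before and after. Hence $\mathrm{gpn}(G')=\mathrm{gpn}(G)$, which is exactly what the paper proves in its Case 1. Moreover the new structure is still not good, since $s$ now lies on at least two squares. So this move is only a reduction, with no gain, to the case of a single squared vertex $u$ lying on two squares. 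It also breaks your termination argument via strict increase.

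\textbf{For that remaining case you give no working transformation.} Sliding a square branch from $u$ to a tree neighbour in $U$ is the inverse of the Case 1 move: it is neutral and recreates two squared vertices. ``Merging the two squares' branches'' is not defined.

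\textbf{The missing idea.} The paper takes the tree part $H$ off $u$ entirely and reattaches it at a squared vertex $z$ of a component $G_1$ of $G-u$ that is farthest from $u$. Such a $z$ lies on exactly one square and is antipodal to that square's active vertex, so $H$ becomes good and antipodality is kept.

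This move unavoidably decreases some pair counts, so your ``no pair decreases'' plan cannot work. Take $\mathcal{C}_{8,2}$ with squares $u a_1 b_1 c_1 u$ and $u a_2 b_2 c_2 u$ and a pendant vertex $h$ at $u$. Here $u$ is the only active vertex, so ``attach at an already active vertex'' is impossible. The only re-attachment points for $h$ giving a unipath resolved graph are $b_1,b_2$, and moving $h$ to $b_1$ drops $\mathrm{gpn}(h,b_1)$ from $2$ to $1$. When $G_1$ contains several squares, even the sum over a single square's vertex set can drop, so your fallback of aggregating as in the $v_3,v_4$ step of Lemma \ref{Lemma_antipodal} is not enough.

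What makes the argument work is a balancing choice of branch. Choose $G_1$ lighter, meaning $\mathrm{gpn}_G(u,V_1)\le\mathrm{gpn}_G(u,V_2)$, and compare whole branches. Pairs in $V_H\times V_1$ lose at most a factor $2^{-c}$, while pairs in $V_H\times V_2$ gain a factor $2^{c}$. Since these sums factor through $u$, the net change exceeds $|V_H|(\mathrm{gpn}_G(u,V_2)-\mathrm{gpn}_G(u,V_1))\ge 0$.
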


\begin{proof}
Due to Lemma \ref{Lemma_antipodal}, we may assume that $G$ is antipodal. Let
us assume that $G$ is not a unipath resolved. This means that there exists a
unipath structure $H$ in $G$ which is not good. Hence, $H$ contains either at
least two squared vertices or only one squared vertex which belongs to at
least two squares. We consider these two possibilities as two separate cases.

\medskip

\noindent\textbf{Case 1.} $H$\emph{ contains at least two squared vertices.}
An example of such a unipath structure is the left unipath structure of the
graph in Figure \ref{Fig_unpathStructures}.a). Let $S=\{s_{1},\ldots,s_{p}\}$
be the set of squared vertices in $H$, where $p\geq2.$ Denote by $S_{i}$ the
set of squared neighbors of $s_{i}$ which do not belong to $H.$ Notice that
$S_{i}$ contains at least two vertices for every $i\in\{1,\ldots,p\}.$ Let
$G^{\prime}$ be the graph obtained from $G$ by removing edges $s_{i}x$ for
every $x\in S_{i}$ and every $i\in\{2,\ldots,p\},$ and then adding the edge
$s_{1}x$ instead. Notice that $H$ becomes a unipath structure in $G^{\prime}$
which contains only one squared vertex $s_{1}$ which belongs to at least two
squares. Applying this transformation to the graph from Figure
\ref{Fig_unpathStructures}.a) yields the graph from Figure
\ref{Fig_unpathStructures}.b).

Notice that every pair of vertices $x$ and $y$ is connected by the same number
of shortest paths in $G$ and $G^{\prime},$ so $\mathrm{gpn}(G^{\prime
})=\mathrm{gpn}(G).$

\medskip

\noindent\textbf{Case 2.} $H$\emph{ contains a single squared vertex which is
shared by at least two squares.} An example of such a structure are both
unipath structures of the graph in Figure \ref{Fig_unpathStructures}.b). We
denote the vertex set of $H$ by $V_{H}.$ Let $u$ be the unique squared vertex
of $H$ and let $C_{1}$ and $C_{2}$ be two squares which contain $u.$ Notice
that $u$ can be contained in more than two squares, but any two squares are
sufficient. Denote by $G_{i}$ a connected component of $G-u$ which contains a
vertex of $C_{i}$ distinct from $u,$ and let $V_{i}=V(G_{i})$. Without loss of
generality, we may assume $\mathrm{gpn}_{G}(u,V_{1})\leq\mathrm{gpn}%
_{G}(u,V_{2}).$ Let $z$ be a squared vertex of $G_{1}$ furthest from $u.$
Notice that $z$ belongs to only one square $C$ and $z$ is antipodal to an
active vertex of $C.$ Also, let $S_{u}$ be the set of all neighbors of $u$
contained in $H.$ Let $G^{\prime}$ be the graph obtained from $G$ by removing
all edges $ux,$ where $x\in S_{u},$ and adding edges $zx$ instead. Applying
this transformation to both unipath structures of the graph in Figure
\ref{Fig_unpathStructures}.b) yields the graph in Figure \ref{Fig_resolved}.

Since $z$ belongs to only one square, $H$ becomes a good unipath structure of
$G^{\prime}.$ Also, since $z$ is antipodal to an active vertex of $C,$ we know
that $G^{\prime}$ remains antipodal. It remains to show that $\mathrm{gpn}%
(G^{\prime})>\mathrm{gpn}(G).$ Let $P$ be a shortest path between $u$ and $z$
in $G.$ Denote by $c$ the number of distinct squares which share a non-active
vertex with $P$. Notice that $c\geq1,$ since at least $C_{1}$ shares a
non-active vertex with $P.$ Also, notice that $\mathrm{gpn}(x,y)$ may decrease
from $G$ to $G^{\prime}$ only if $x\in V_{H}$ and $y\in V_{1}$, in which case
$\mathrm{gpn}_{G^{\prime}}(x,y)\geq2^{-c}\mathrm{gpn}_{G}(x,y).$ On the other
hand, for $x\in V_{H}$ and $y\in V_{2}$ we have $\mathrm{gpn}_{G^{\prime}%
}(x,y)\geq2^{c}\mathrm{gpn}_{G}(x,y).$

We obtain%
\begin{align*}
\mathrm{gpn}(G^{\prime})-\mathrm{gpn}(G)\geq{}  &  \mathrm{gpn}_{G^{\prime}%
}(V_{H},V_{1})-\mathrm{gpn}_{G}(V_{H},V_{1})\\
&  +\mathrm{gpn}_{G^{\prime}}(V_{H},V_{2})-\mathrm{gpn}_{G}(V_{H},V_{2}).
\end{align*}
Based on these considerations, we conclude%
\[
\mathrm{gpn}(G^{\prime})-\mathrm{gpn}(G)\geq(2^{-c}-1)\mathrm{gpn}_{G}%
(V_{H},V_{1})+(2^{c}-1)\mathrm{gpn}_{G}(V_{H},V_{2}).
\]
Since $c\geq1$ and $\mathrm{gpn}_{G}(V_{H},V_{1})>0,$ we have
\begin{align*}
\mathrm{gpn}(G^{\prime})-\mathrm{gpn}(G)  &  >\mathrm{gpn}_{G}(V_{H}%
,V_{2})-\mathrm{gpn}_{G}(V_{H},V_{1})\\
&  =\left\vert V_{H}\right\vert (\mathrm{gpn}_{G}(u,V_{2})-\mathrm{gpn}%
_{G}(u,V_{1}))\geq0,
\end{align*}
which proves the claim in this case.

\medskip

Since $G$ contains at least one non-good unipath structure, and the unipath
structure of Case 1 reduces to that of Case 2 where the strict inequality
$\mathrm{gpn}(G^{\prime})-\mathrm{gpn}(G)>0$ holds, we have established
$\mathrm{gpn}(G^{\prime})>\mathrm{gpn}(G)$. Since the described transformation
of $G$ to $G^{\prime}$ reduces the number of unipath structures which are not
good, repeated application of the transformation results in a unipath resolved
cactus graph $G^{\prime}$ with $\mathrm{gpn}(G^{\prime})-\mathrm{gpn}(G)>0$.
\end{proof}

\begin{figure}[h]
\begin{center}
\raisebox{-0.9\height}{\includegraphics[scale=0.6]{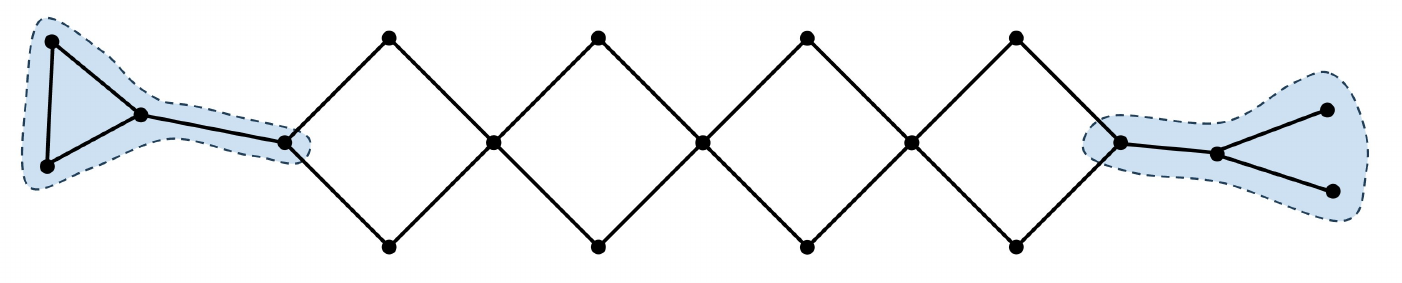}}
\end{center}
\caption{The figure shows a squared chain.}%
\label{Fig_squaredChain}%
\end{figure}

A vertex of $G\in\mathcal{C}_{n,k}$ is \emph{multisquared} (resp.
\emph{bisquared}) if it belongs to at least two (resp. precisely two) squares.
A \emph{squared chain} is a unipath resolved cactus graph in which every
multisquared vertex is bisquared. An example of a squared chain is shown in
Figure \ref{Fig_squaredChain}.

\begin{lemma}
\label{Lemma_squaredChain}Let $G\in\mathcal{C}_{n,k}$ be a cactus graph. If
$G$ is not a squared chain, then there exists a squared chain $G^{\prime}%
\in\mathcal{C}_{n,k}$ with $\mathrm{gpn}(G^{\prime})>\mathrm{gpn}(G).$
\end{lemma}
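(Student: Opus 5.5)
By Lemma \ref{Lemma_unipathResolved} we may assume that $G$ is unipath resolved. If it is not a squared chain, then some vertex $u$ belongs to at least three squares $C_1,C_2,C_3$. The plan is to reduce the number of squares through $u$ by a rewiring in the spirit of Case~2 of Lemma \ref{Lemma_unipathResolved}, showing that $\mathrm{gpn}$ strictly increases, and then to iterate. The iteration terminates because a suitable measure of multisquaredness, say $\sum_v \max(0,s(v)-2)$ with $s(v)$ the number of squares containing $v$, strictly decreases.

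\textbf{Setup.} For each $i$ let $G_i$ be the union of components of $G-u$ that meet $C_i-u$, and let $V_i=V(G_i)$. The sets $V_1,V_2,V_3$ are pairwise disjoint, and $u$ is their only connection. Order the indices so that $\mathrm{gpn}_G(u,V_1)\le \mathrm{gpn}_G(u,V_2)\le\mathrm{gpn}_G(u,V_3)$.

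\textbf{Construction.} Detach the branch $G_1$ (together with $C_1$) from $u$ and reattach it at a vertex $z$ of $C_2$ or $C_3$. The best choice appears to be the vertex $z$ of $C_3$ antipodal to $u$, or more generally a passive squared vertex far from $u$ in $G_3$, so that $z$ lies in only one square. Concretely, replace every edge $ux$ with $x\in V(C_1)$ by the edge $zx$. After this change $u$ lies in one fewer square and $z$ becomes bisquared. The graph stays in $\mathcal{C}_{n,k}$ and remains antipodal, since $z$ is antipodal to the active vertex $u$ of $C_3$. Unipath structures are untouched, so the graph is still unipath resolved.

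\textbf{Counting.} Only pairs with one end in $V_1$ and the other outside $V_1$ change. For $x\in V_1$, every geodesic to a vertex $y\in V_3$ beyond $z$ now avoids $u$, which can only shorten paths; one has to verify that the geodesic counts behave multiplicatively through the cut vertices $u$ and $z$. For $y$ outside $V_3$, each geodesic from $x$ to $y$ now additionally passes through the square $C_3$ between $z$ and $u$, on an antipodal pair, which doubles $\mathrm{gpn}(x,y)$. For $y\in V_3$, the count changes by a factor of at least $2^{-c}$, where $c$ is the number of squares crossed antipodally between $u$ and $z$. This gives a bound of the form
\[
\mathrm{gpn}(G')-\mathrm{gpn}(G)\ge(2^{c}-1)\,\mathrm{gpn}_G(V_1,V_2)+(2^{-c}-1)\,\mathrm{gpn}_G(V_1,V_3).
\]
Both terms factor through $u$ as $\mathrm{gpn}_G(V_1,u)\cdot\mathrm{gpn}_G(u,V_j)$.

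\textbf{Main obstacle.} The difficulty is the sign of this estimate: $V_3$ was chosen as the largest branch, so the lost term may dominate. I would therefore attach $G_1$ instead to the far vertex of the smallest other branch $C_2$, and compare against the gains toward both $V_2$ and $V_3$. The intended conclusion is that the gain toward the larger branch $V_3$, where the factor is $2^{c}\ge 2$, outweighs the loss toward the smaller branch $V_2$, where the factor is $2^{-c}\ge 1/2$, using $\mathrm{gpn}_G(u,V_2)\le\mathrm{gpn}_G(u,V_3)$ and mirroring the final inequality of Case~2. The loss from pairs where $y$ lies on the new $u$--$z$ path needs careful bookkeeping.
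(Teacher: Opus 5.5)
Once you discard the first construction and adopt the one in your last paragraph, your argument is the paper's proof up to relabeling. The paper orders only two branches, $\mathrm{gpn}_G(u,V_1)\le\mathrm{gpn}_G(u,V_2)$. It moves the third branch $G_3$ (its two edges at $u$) onto a squared vertex $z$ of $G_1$ furthest from $u$. It then uses $\mathrm{gpn}_{G'}\ge 2^{-c}\,\mathrm{gpn}_G$ toward $V_1$ and $\mathrm{gpn}_{G'}\ge 2^{c}\,\mathrm{gpn}_G$ toward $V_2$, together with the factorization $\mathrm{gpn}_G(V_3,V_i)=\mathrm{gpn}_G(u,V_3)\,\mathrm{gpn}_G(u,V_i)$ through the cut vertex $u$. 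This gives $\mathrm{gpn}(G')-\mathrm{gpn}(G)>\mathrm{gpn}_G(u,V_3)\bigl(\mathrm{gpn}_G(u,V_2)-\mathrm{gpn}_G(u,V_1)\bigr)\ge 0$. Your version (move $G_1$ onto the far end of $G_2$, gain toward $V_3$) is the same argument. Which branch is moved is irrelevant; the only requirement is that the host has the smaller value of $\mathrm{gpn}_G(u,\cdot)$ among the other two. Your Construction and Counting paragraphs, with the host in the largest branch $G_3$, should be dropped, since there the bound has no definite sign.

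Several points in the write-up are wrong and need fixing.

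\textbf{The factor $2^{-c}$.} Since $c\ge 1$, you have $2^{-c}\le 1/2$, not $\ge$, and the factor $2^{-c}$ is actually attained (for $y=z$). No lower bound beyond $2^{-c}>0$ is needed, however. The loss is then strictly less than $\mathrm{gpn}_G(V_1,V_2)$, while the gain is at least $(2^c-1)\,\mathrm{gpn}_G(V_1,V_3)\ge\mathrm{gpn}_G(V_1,V_3)$. The pairs with $y$ on the $u$--$z$ path need no separate bookkeeping: by the chain-of-squares structure, every $y$ in the host branch satisfies $\mathrm{gpn}_G(u,y)\le \mathrm{gpn}_G(u,z)\,\mathrm{gpn}_G(z,y)=2^{c}\,\mathrm{gpn}_G(z,y)$.

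\textbf{The choice of $z$.} Take $z$ to be a squared vertex of the host branch furthest from $u$, as the paper does. Such a vertex lies in exactly one square and is antipodal to that square's active vertex toward $u$. The antipode of $u$ in the first square may lie in further squares, so it is not a safe choice. A \emph{passive} $z$ with the right antipodality need not exist either: if the host branch is a single square with a tree hanging at the antipode of $u$, the only passive squared vertices are neighbours of $u$.

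\textbf{Unipath resolvedness.} Consequently, your claim that unipath structures are untouched is false. If a unipath structure hangs at $z$, it stops being good once $z$ becomes bisquared, so $G'$ need not be unipath resolved. The paper handles this by applying Lemma~\ref{Lemma_unipathResolved} at the end. In your iteration you should note that only its Case~2 is triggered, which moves no square edges and therefore leaves your measure $\sum_v\max(0,s(v)-2)$ unchanged.
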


\begin{proof}
The proof is similar to the proof of Case 2 in Lemma
\ref{Lemma_unipathResolved}. Namely, assume that $G$ is not a squared chain.
Due to Lemma \ref{Lemma_unipathResolved} we may assume that $G$ is unipath
resolved, so $G$ must contain a multisquared vertex $u$ which is not
bisquared. This means $u$ belongs to at least three squares $C_{1},$ $C_{2}$
and $C_{3}.$ For illustration, consider the graph shown in
Figure~\ref{Fig_resolved}. The transformation introduced in this proof
converts it into the squared chain depicted in Figure~\ref{Fig_squaredChain}.

Again, let $G_{i}$ be the connected component of $G-u$ which contains a vertex
of $C_{i},$ and let $V_{i}=V(G_{i}).$ We may assume that $\mathrm{gpn}%
_{G}(u,V_{1})\leq\mathrm{gpn}_{G}(u,V_{2}).$ Let $S_{3}$ denote all neighbors
of $u$ in $G_{3}.$ Notice that $S_{3}$ contains precisely two vertices, and
these two vertices belong also to $C_{3}.$ Denote by $z$ a squared vertex in
$G_{1}$ furthest from $u.$ Let $G^{\prime}$ be the graph obtained from $G$ by
removing edges $ux$ for every $x\in S_{3}$ and inserting edges $zx$ instead.
We wish to show that $\mathrm{gpn}(G^{\prime})>\mathrm{gpn}(G).$

Let $P$ be a shortest path connecting $u$ and $z,$ and $c$ the number of
squares of $G$ which share a non-active vertex with $P.$ Due to $C_{1},$ it is
obvious that $c\geq1.$ Notice that $\mathrm{gpn}(x,y)$ may decrease from $G$
to $G^{\prime}$ only if $x\in V_{3}$ and $y\in V_{1},$ but then
\[
\mathrm{gpn}_{G^{\prime}}(V_{1},V_{3})-\mathrm{gpn}_{G}(V_{1},V_{3}%
)\geq(2^{-c}-1)\mathrm{gpn}_{G}(V_{1},V_{3}).
\]
On the other hand, for $x\in V_{3}$ and $y\in V_{2}$ we have%
\[
\mathrm{gpn}_{G^{\prime}}(V_{2},V_{3})-\mathrm{gpn}_{G}(V_{2},V_{3})\geq
(2^{c}-1)\mathrm{gpn}_{G}(V_{2},V_{3}).
\]
Since $2^{-c}-1>-1$, we conclude that%
\[
\mathrm{gpn}(G^{\prime})-\mathrm{gpn}(G)>\mathrm{gpn}_{G}(V_{2},V_{3}%
)-\mathrm{gpn}_{G}(V_{1},V_{3}).
\]
Notice that for $i\in\{1,2\}$ we have
\begin{align*}
\mathrm{gpn}_{G}(V_{3},V_{i})  &  =\sum_{x\in V_{3}}\mathrm{gpn}_{G}%
(x,V_{i})=\sum_{x\in V_{3}}(\mathrm{gpn}_{G}(x,u)\cdot\mathrm{gpn}_{G}%
(u,V_{i}))\\
&  =\mathrm{gpn}_{G}(u,V_{i})\sum_{x\in V_{3}}\mathrm{gpn}_{G}%
(x,u)=\mathrm{gpn}_{G}(u,V_{i})\mathrm{gpn}_{G}(u,V_{3})
\end{align*}
This further implies%
\[
\mathrm{gpn}(G^{\prime})-\mathrm{gpn}(G)>\mathrm{gpn}_{G}(u,V_{3}%
)(\mathrm{gpn}_{G}(u,V_{2})-\mathrm{gpn}_{G}(u,V_{1}))\geq0,
\]
which establishes that $\mathrm{gpn}(G^{\prime})>\mathrm{gpn}(G).$

Notice that repeated application of the described transformation yields an
antipodal cactus graph $G^{\prime}$ in which every multisquared vertex is
bisquared, but which is not necessarily unipath resolved. But applying Lemma
\ref{Lemma_unipathResolved} to the obtained $G^{\prime}$ yields the claim.
\end{proof}

So far we have narrowed our search for maximal cactus graphs from
$\mathcal{C}_{n,k}$ to squared chains. Let us now consider the maximum
possible number of squares in a cactus graph $G\in\mathcal{C}_{n,k}.$ In order
for $G$ to have $k$ cycles, it must hold that $n\geq2k+1.$ Notice that
$n=2k+1$ implies that all cycles of $G$ are triangles. This means every pair
of vertices in $G$ is connected by precisely one shortest path, so all such
cacti have the same geodesic subpath number. Hence, we will assume $n>2k+1,$
i.e. that $G$ can contain squares.

If all $k$ cycles of $G$ are squares, then $n\geq3k+1.$ This implies that for
$2k+1<$ $n<3k+1,$ the graph $G$ must contain at least one square, but cannot
contain $k$ squares. In order to establish the maximum possible number of
squares in this case, denote by $s$ and $t$ the number of squares and
triangles, respectively. Notice that $n\geq1+2t+3s$ and $t+s=k,$ so we obtain
$3s\leq n-1-2(k-s)$ which implies $s\leq n-2k-1.$

A square chain $G\in\mathcal{C}_{n,k}$ is \emph{maximal} if it contains the
maximum possible number of squares. From the above considerations, this means
$k$ squares for $n\geq3k+1$ and $n-2k-1$ squares for $n<3k+1.$ Notice that a
maximal square chain on $n<3k+1$ vertices cannot contain a bridge.

\begin{lemma}
\label{Lemma_maximal}Let $G\in\mathcal{C}_{n,k}$ be a cactus graph. If $G$ is
not a maximal square chain, then there exists a maximal square chain
$G^{\prime}\in\mathcal{C}_{n,k}$ with $\mathrm{gpn}(G^{\prime})>\mathrm{gpn}%
(G).$
\end{lemma}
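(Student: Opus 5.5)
By Lemma \ref{Lemma_squaredChain} we may assume that $G$ is a squared chain. If $G$ is not maximal, it contains fewer squares than the maximum, so it has at least one triangle $T=v_{1}v_{2}v_{3}v_{1}$. Moreover, there must be a vertex available to enlarge $T$ into a square. Concretely, either $G$ has a pendant vertex or a bridge whose removal frees a vertex, or (when $n<3k+1$) we must find the spare vertex elsewhere. The plan is to turn $T$ into a square without destroying any other square, show that $\mathrm{gpn}$ strictly increases, and then apply Lemma \ref{Lemma_squaredChain} again to restore the squared chain structure. Iterating raises the number of squares until $G$ is maximal; each step strictly increases $\mathrm{gpn}$, which gives the claim.

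The transformation is as follows. Since $s$ is not maximal, we have $n\geq1+2t+3s+1$, so the graph has a vertex that lies on no cycle beyond what is required. In a cactus this means there is a bridge. Choose a pendant vertex $w$, i.e.\ a leaf of the block tree lying in a tree part, with neighbor $w'$. Remove $w$ and subdivide the edge $v_{1}v_{2}$ of $T$ by $w$. This gives $G'\in\mathcal{C}_{n,k}$ with $T$ replaced by the square $v_{1}wv_{2}v_{3}$.

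To compare geodesic counts, I would use the multiplicativity of $\mathrm{gpn}$ along cut vertices in cacti. For $x,y$ in different components $G_i$ and $G_j$ of $G-E(T)$ (with $v_i\in G_i$), we have $\mathrm{gpn}(x,y)=\mathrm{gpn}(x,v_i)\,\mathrm{gpn}(v_i,v_j)\,\mathrm{gpn}(v_j,y)$. In $G'$ the factor $\mathrm{gpn}(v_1,v_2)$ or $\mathrm{gpn}(v_3,w)$ becomes $2$. In particular, all pairs $x\in V(G_1)$, $y\in V(G_2)$ keep their count, while pairs $x\in V(G_3)$, $y=w$ double. Pairs whose geodesics never entered $T$ are unchanged. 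The only losses come from pairs involving $w$ itself, which previously hung at $w'$. To control these, I would instead choose $w$ inside the structure so that $w'$ lies in $G_3$, for example by first moving pendant trees as in Lemma \ref{Lemma_unipathResolved}. Then $w$ is the antipode of $v_3$, and every $(w,y)$ pair in $G'$ has at least as many geodesics as the old $(w,y)$ pair in $G$, by the same segment-replacement argument used in Lemma \ref{Lemma_antipodal}.

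The main obstacle is the case $n<3k+1$, where a maximal squared chain contains no bridge. There the spare vertex may not be a pendant vertex hanging near $T$. It could instead be a vertex hanging far away, or even absorbed into a unipath structure attached to a square, and moving it could decrease geodesic counts to vertices near its old location. I would first try a global pairing: the decrease is at most $\mathrm{gpn}_{G}(w,V(G))$, and the increase is at least the doubling of $\mathrm{gpn}_G(V(G_3),V(G_1)\cup V(G_2))$. So a cleaner choice is to take $w$ to be a pendant vertex whose removal leaves a graph $G-w$ in which $\mathrm{gpn}(w,\cdot)$ is small, and compare. If this does not go through, the fallback is a direct local move in which the triangle absorbs the vertex $w'$ adjacent to it through a bridge, which keeps all distance relations aside from that bridge, handled as in Lemma \ref{Lemma_odd}.
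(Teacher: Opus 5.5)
Your pairwise-monotonicity argument only works when the spare vertex sits on the triangle itself.

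\textbf{Where your argument works.} If $w$ is pendant with $w'=v_{3}$, then subdividing $v_{1}v_{2}$ by $w$ loses nothing. (Pairs in $V(G_{1})\times V(G_{2})$ actually double, since $v_{1},v_{2}$ become antipodal; they do not ``keep their count''.) Your fallback, letting the triangle absorb the far end of an incident bridge, is essentially the paper's Case~1. That version also covers bridges that yield no pendant vertex at $T$. A bridge need not yield a pendant vertex at all: two triangles joined by a bridge ($n=6$, $k=2$) form a non-maximal squared chain with no leaf.

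\textbf{Where it fails.} Your claim that $w'\in V(G_{3})$ suffices is false. For $y\in V(G_{1})$ the count $\mathrm{gpn}_{G}(w,v_{3})\,\mathrm{gpn}_{G}(v_{1},y)$ drops to $\mathrm{gpn}_{G}(v_{1},y)$. Here $\mathrm{gpn}_{G}(w,v_{3})=2^{c}$ when $w$ lies beyond a chain of $c$ squares from $v_{3}$. This is exactly the configuration you leave open, and it is not tied to $n<3k+1$. In a squared chain, all triangles may lie in one end structure $K_{2}$ (with no bridge) and all bridges in the other, acyclic, end structure $K_{1}$, so that no triangle meets a bridge. The smallest instance is $n=7$, $k=2$: a square $v_{1}abv_{2}$, a pendant edge at $v_{1}$, and a triangle at $v_{2}$. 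In this situation every move shifts a vertex or a triangle across the $c\geq1$ squares. That multiplies some counts by $2^{c}$ and divides others by $2^{c}$, so no pairwise monotonicity is available. ``Global pairing'' and ``moving pendant trees as in Lemma~\ref{Lemma_unipathResolved}'' are not arguments. That lemma never transfers vertices between the two ends of the chain, and such a transfer can decrease $\mathrm{gpn}$ (compare Case~1 in the proof of the final theorem on balanced chains).

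\textbf{What is missing} is the explicit count the paper carries out in its Case~2.
\begin{itemize}
\item If $K_{2}$ is a single triangle $v_{2}uv$, move a leaf $w$ of $K_{1}$ to subdivide $uv$ (this is your move). The losses occur only at $u$ and $v$ and total $2(2^{c}-1)$. Doubling all counts from $w$ to squared vertices gains $\mathrm{gpn}_{G}(w,V_{S})=2^{c+2}-3$, and pairs with the rest of $K_{1}$ only gain. The net change is therefore at least $2^{c+1}-1>0$.
\item If $K_{2}$ has at least two triangles and $|V(K_{1})|\geq3$, shift a triangle edge from $K_{2}$ onto a path $P_{3}$ of $K_{1}$; this leaves $\mathrm{gpn}$ unchanged.
\item If $K_{2}$ has at least two triangles and $|V(K_{1})|=2$, move a triangle of $K_{2}$ with a single active vertex to $v_{1}$. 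This changes $\mathrm{gpn}$ by $2(|Y|-|X|)(2^{c}-1)>0$. Here $|X|=1$ is the number of non-squared vertices of $K_{1}$, and $|Y|\geq2$ is the number of non-squared vertices remaining in $K_{2}$.
\end{itemize}
In both of the last two subcases a triangle now meets a bridge, and the local case applies. Without this case analysis, or an equivalent explicit count, your proof does not cover all non-maximal squared chains.
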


\begin{proof}
Assume that $G$ is not a maximal square chain. Due to Lemma
\ref{Lemma_squaredChain} we may assume that $G$ is a squared chain, hence $G$
is not maximal. This implies that at least one out of $k$ cycles in $G$ is a
triangle. Moreover, this implies that $G$ contains a bridge. Let us show that
there exists a squared chain $G^{\prime}$ with more squares than $G$ and
$\mathrm{gpn}(G^{\prime})>\mathrm{gpn}(G).$ We distinguish the following two
cases with respect to whether there exists a bridge in $G$ with one end-vertex
belonging to a triangle.

\medskip

\noindent\textbf{Case 1.} $G$\emph{ contains a bridge with one end-vertex
belonging to a triangle }$C.$ Let us assume that $C=uvwu,$ and that $uz$ is a
bridge which shares the end-vertex $u$ with $C.$ Let $G^{\prime}$ be the graph
obtained from $G$ by removing the edge $uw$ and adding the edge $wz$ instead.
By this transformation, the triangle $C$ of $G$ becomes a square in
$G^{\prime}.$

Notice that $\mathrm{gpn}(x,y)$ does not decrease from $G$ to $G^{\prime}$ for
any pair of vertices $x,y\in V(G).$ On the other hand, $\mathrm{gpn}%
_{G^{\prime}}(u,w)=2>1=\mathrm{gpn}_{G}(u,w).$ Hence, we have established
$\mathrm{gpn}(G^{\prime})>\mathrm{gpn}(G).$ Notice that $G^{\prime}$ may
contain a unipath structure which is not good, but applying Lemma
\ref{Lemma_unipathResolved} to $G^{\prime}$ yields a squared chain with more
squares than $G$ and $\mathrm{gpn}(G^{\prime})>\mathrm{gpn}(G).$

\medskip

\noindent\textbf{Case 2.} $G$\emph{ does not contain a bridge with one
end-vertex belonging to a triangle}. Denote by $E_{S}$ the set of all edges of
$G$ which belong to squares. Also, let $v_{1}$ (resp. $v_{2}$) denote a vertex
which belongs to the first (resp. last) square in the square sequence of $G$,
which is antipodal to the bisquared vertex of the respective square. Denote by
$K_{i}$ the component of $G-E_{S}$ which contains $v_{i},$ for $i\in\{1,2\}.$
Notice that this case is possible only if one of the components, say $K_{1},$
does not contain cycles, and $K_{2}$ does not contain a bridge. Since $G$
contains at least one triangle, this implies $K_{2}$ contains at least one triangle.

\medskip

\noindent\textbf{Case 2.a.} $K_{2}$\emph{ contains precisely one triangle
}$C=v_{2}uvv_{2}.$ Since $G$ contains a bridge, this means $K_{1}$ contains a
bridge, i.e. there exists at least one leaf $w$ in $G$ which is contained in
$K_{1}.$ Denote by $z$ the only neighbor of $w$ in $K_{1}$ and notice that it
may happen that $z=v_{1}.$ Let $G^{\prime}$ be the graph obtained from $G$ by
removing edges $uv$, $wz$ and adding edges $uw$, $vw$ instead, in order to
obtain a new square. We show that $\mathrm{gpn}(G^{\prime})>\mathrm{gpn}(G).$

If $G$ does not contain a square, then the number of geodesics does not
decrease from $G$ to $G^{\prime}$ for any pair of vertices. Yet, by creating a
square $C^{\prime}=v_{2}uwvv_{2}$ in $G^{\prime}$ the number of geodesics for
some pairs of vertices, say $v_{2}$ and $w,$ strictly increases. Hence,
$\mathrm{gpn}(G^{\prime})>\mathrm{gpn}(G).$ On the other hand, if $G$ contains
at least one square, let $c$ denote the number of squares in $G$ and $V_{S}$
the set of all squared vertices of $G$. Obviously, $\left\vert V_{S}%
\right\vert \geq4.$ It is easily verified that $\mathrm{gpn}_{G^{\prime}%
}(w,V_{S})=2\mathrm{gpn}_{G}(w,V_{S})$ and%
\[
\mathrm{gpn}_{G}(w,V_{S})=1+%
{\displaystyle\sum_{i=1}^{c}}
2^{i}+2%
{\displaystyle\sum_{i=1}^{c}}
2^{i-1}=2^{c+2}-3.
\]
Further, for $X=V(K_{1})\backslash\{w,v_{1}\}$ it holds that $\mathrm{gpn}%
_{G^{\prime}}(w,X)=2^{c+1}\mathrm{gpn}_{G}(w,X)=2^{c+1}\left\vert X\right\vert
.$ Notice that it may happen $\left\vert X\right\vert =0.$ Finally, for
$Y=V(K_{2})\backslash\{v_{2}\}=\{u,v\}$ it holds that $\mathrm{gpn}%
_{G^{\prime}}(w,Y)=\left\vert Y\right\vert $ and $\mathrm{gpn}_{G}%
(w,Y)=2^{c}\left\vert Y\right\vert .$ Since for all other pairs of vertices of
$G$ the number of geodesics is the same in $G$ and $G^{\prime},$ we conclude
\begin{align*}
\mathrm{gpn}(G^{\prime})-\mathrm{gpn}(G)  &  =\mathrm{gpn}_{G}(w,V_{S}%
)+(2^{c+1}-1)\left\vert X\right\vert +(1-2^{c})\left\vert Y\right\vert \\
&  \geq2^{c+2}-3+(1-2^{c})\cdot2=2^{c+1}-1>0
\end{align*}
as claimed.

\medskip

\noindent\textbf{Case 2.b.} $K_{2}$\emph{ contains at least two triangles.}
Assume first that $\left\vert V(K_{1})\right\vert \geq3.$ This implies that
$K_{1}$ contains a subpath $P$ of length two. Denote $P=uvw$ and let $e$ be an
edge of $K_{2}$ which belongs to a triangle. Consider the graph $G^{\prime}$
obtained from $G$ by removing an edge $e$ and adding the edge $uw$ instead. It
is easily seen that $\mathrm{gpn}(G^{\prime})=\mathrm{gpn}(G).$ This
transformation decreases the number of triangles in $K_{2}$ by one, and
creates a new triangle in $K_{1}.$ Since $K_{2}$ contains at least two
triangles in $G,$ in $G^{\prime}$ the component $K_{2}$ will contain a bridge
incident to a triangle. Hence, this case reduces to Case 1 and the claim holds.

It only remains to consider the case $\left\vert V(K_{1})\right\vert =2,$
since $K_{1}$ must contain at least one bridge. Let $C=uvw$ be a triangle in
$K_{2}$ with only one active vertex, say $u.$ Obviously, such a triangle must
exist in $K_{2}.$ Let $G^{\prime}$ be a graph obtained from $G$ by removing
edges $uv,$ $uw$ and adding edges $v_{1}v,$ $v_{1}w$ instead. Since
$G^{\prime}$ belongs to Case 1 of this proof, it is sufficient to show
$\mathrm{gpn}(G^{\prime})\geq\mathrm{gpn}(G).$ To see this, let $V_{S}$ be the
set of all squared vertices of $G,$ $X=V(K_{1})\backslash\{v_{1}\},$
$Y=V(K_{2})\backslash\{v_{2},v,w\}$ and $Z=\{v,w\}.$ Notice that
\[
\left\vert X\right\vert =\left\vert V(K_{1})\right\vert -1=1<2\leq\left\vert
V(K_{2})\right\vert -3=\left\vert Y\right\vert .
\]
Let $c$ be the number of squares in $G.$ It is easily seen that
\begin{align*}
\mathrm{gpn}_{G^{\prime}}(Z,V_{S})  &  =\mathrm{gpn}_{G}(Z,V_{S}),\\
\mathrm{gpn}_{G^{\prime}}(Z,X)  &  =2^{-c}\mathrm{gpn}_{G}(Z,X),\\
\mathrm{gpn}_{G^{\prime}}(Z,Y)  &  =2^{c}\mathrm{gpn}_{G}(Z,Y),
\end{align*}
and for all other pairs of vertices the number of geodesics is the same in $G$
and $G^{\prime}.$ Hence,%
\begin{align*}
\mathrm{gpn}(G^{\prime})-\mathrm{gpn}(G)  &  =(2^{-c}-1)\mathrm{gpn}%
_{G}(Z,X)+(2^{c}-1)\mathrm{gpn}_{G}(Z,Y)\\
&  =(2^{-c}-1)\cdot2\cdot\left\vert X\right\vert \cdot2^{c}+(2^{c}%
-1)\cdot2\cdot\left\vert Y\right\vert \\
&  =2(\left\vert Y\right\vert -\left\vert X\right\vert )\left(  2^{c}%
-1\right)  >0
\end{align*}
since $\left\vert Y\right\vert >\left\vert X\right\vert $ and $c\geq1.$ We
have now established $\mathrm{gpn}(G^{\prime})\geq\mathrm{gpn}(G),$ so
applying Case 1 to $G^{\prime}$ yields the claim.
\end{proof}

Let $G\in\mathcal{C}_{n,k}$ be a maximal square chain. Denote by $E_{S}$ the
set of all edges of $G$ which belong to squares of $G.$ Then $G-E_{S}$
contains at most two non-trivial components. Denote by $K_{1}$ and $K_{2}$ the
two largest components of $G-E_{S}$, where we may assume $\left\vert
V(K_{1})\right\vert \geq\left\vert V(K_{2})\right\vert .$ Now we define a
balanced square chain. The definition for $n\geq3k+1$ and $n<3k+1$ is slightly
different, since in the second case the extremal graph contains only squares
and triangles but not bridges. If $n\geq3k+1$ (resp. $2k+1<n<3k+1$) a
\emph{balanced square chain} $G\in\mathcal{C}_{n,k}$ is a maximal square chain
with $\left\vert V(K_{1})\right\vert -\left\vert V(K_{2})\right\vert \leq1$
(resp. $\left\vert V(K_{1})\right\vert -\left\vert V(K_{2})\right\vert \leq2$).

\begin{theorem}
The maximum value of the geodesic subpath number among all cactus graphs from
$\mathcal{C}_{n,k},$ where $n>2k+1,$ is obtained only by a balanced square chain.
\end{theorem}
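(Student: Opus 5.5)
By Lemma \ref{Lemma_maximal}, a maximizer of $\mathrm{gpn}$ in $\mathcal{C}_{n,k}$ (with $n>2k+1$) must be a maximal square chain. So it suffices to compare maximal square chains with each other. The plan is to show that, among them, the value of $\mathrm{gpn}$ depends only on how the non-square vertices are split between the two end components $K_1$ and $K_2$. We then show it is uniquely maximized, up to the structural freedom that does not affect $\mathrm{gpn}$, when the split is balanced.

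\textbf{Structure of a maximal square chain.} Let $G$ be a maximal square chain with $s$ squares $Q_1,\dots,Q_s$, where $s=k$ if $n\ge 3k+1$ and $s=n-2k-1$ otherwise. Consecutive squares share a bisquared vertex, which is antipodal to the previous one. Since $G$ is unipath resolved, every unipath structure is attached at an end square, at the vertex antipodal to the bisquared vertex. Hence $G$ consists of the chain $Q_1\cdots Q_s$ together with $K_1$ hanging at $v_1$ and $K_2$ hanging at $v_2$. Within $K_i\cup\{$chain$\}$ all geodesics are of one of the following kinds:
\begin{itemize}
\item geodesics inside $K_i$, each pair having $\mathrm{gpn}=1$;
\item geodesics inside the chain, whose number is a fixed function of $s$;
\item geodesics from $x\in K_i$ to a chain vertex $y$, with $\mathrm{gpn}(x,y)=\mathrm{gpn}(v_i,y)$.
\end{itemize}
Pairs $x\in K_1$, $y\in K_2$ give $\mathrm{gpn}(x,y)=2^{s}$.

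\textbf{Reduction to the sizes of $K_1$ and $K_2$.} Set $m_i=|V(K_i)|-1$, the number of vertices of $K_i$ other than $v_i$. Then
\[
\mathrm{gpn}(G)=\mathrm{const}(n,k)+\alpha(m_1+m_2)+2^{s}m_1m_2 ,
\]
where $\alpha=\mathrm{gpn}(v_1,V_S)=\mathrm{gpn}(v_2,V_S)$ by the symmetry of the chain. The pairs inside $K_i$ contribute $\binom{m_i+1}{2}$ plus zero-length terms. Since $m_1+m_2=n-1-3s$ is fixed, the terms $\binom{m_1+1}{2}+\binom{m_2+1}{2}+2^s m_1m_2$ also depend only on $m_1m_2$, with a positive coefficient $2^s-1$. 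Because $s\ge 1$, this coefficient is positive, so $\mathrm{gpn}$ is strictly increasing in $m_1m_2$. That product is maximized exactly when $|m_1-m_2|\le 1$, which is the balanced case for $n\ge 3k+1$.

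\textbf{The case $2k+1<n<3k+1$.} Here $K_1$ and $K_2$ contain no bridges; they are trees of triangles. Then $m_i$ is even, say $m_i=2t_i$ with $t_1+t_2=k-s$. Maximizing $m_1m_2$ under this parity constraint forces $|t_1-t_2|\le1$, that is $|V(K_1)|-|V(K_2)|\le 2$, matching the definition. I must also check that inside each $K_i$ the triangle arrangement does not matter. This holds because every pair inside a component of triangles has a unique geodesic, and its distance to $v_i$ is irrelevant for counting.

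\textbf{Main obstacle.} The main obstacle is making the structural description rigorous. I need to show that a maximal square chain really is a linear chain of squares with at most two hanging components at antipodal end vertices. I need to rule out configurations such as side components attached at a passive vertex of a middle square, since antipodality forbids a third active vertex. I will derive this from the definitions of antipodal, unipath resolved and bisquared, plus the observation that the block structure of squares forms a path because every multisquared vertex is bisquared and each square has at most two active vertices. I also need to verify the strict-uniqueness phrase "only", which follows from strict monotonicity in $m_1m_2$ together with the strict inequalities of the earlier lemmas.
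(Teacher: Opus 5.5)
Your proof is correct, but it takes a different route from the paper's. The paper never computes $\mathrm{gpn}$ of a maximal square chain. It improves an unbalanced chain by local transplant moves. For $n\ge 3k+1$ it moves a pendant leaf of the larger end component $K_1$ onto $K_2$, and the change is $(2^k-1)(|V_1|-|V_2|-1)>0$. For $2k+1<n<3k+1$ it moves a pendant triangle (one with a single active vertex) from $K_1$ to $K_2$ and checks the sign of the change.

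You instead derive the closed form $\mathrm{gpn}(G)=\Phi(n,k)+(2^s-1)m_1m_2$, where $m_1+m_2=n-1-3s$ is fixed, and then maximize the product. The evenness of $m_i$ gives the threshold $2$ in the triangle case. This yields more than the paper's argument:
\begin{itemize}
\item all balanced square chains have the same value, so they are exactly the maximizers, whereas the paper only shows that non-balanced graphs are beaten;
\item the internal shape of $K_1,K_2$ is visibly irrelevant;
\item both ranges of $n$ are handled uniformly.
\end{itemize}

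Your formula also gives the exact effect of the paper's moves: $(2^s-1)(m_1-m_2-1)$ for the leaf and $2(2^s-1)(m_1-m_2-2)$ for the triangle. The second is a useful check on the paper's Case~2. There $\mathrm{gpn}_G(T,V_2)$ is really $2^{c+1}|V_2|$, not the displayed $2|V_2|$, although the sign conclusion survives. In exchange, the paper's approach only has to track pairs containing the moved vertices, and it matches the transformation style of the preceding lemmas.

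Both proofs rest on the structural fact you flag as the main obstacle; the paper simply asserts that $G-E_S$ has at most two nontrivial components. When you write this up, note what forces it. Bisquaredness and antipodality alone are not enough. It is the goodness of unipath structures that forces consecutive squares to share a vertex rather than be linked through bridges or triangles. That is what makes the chain part a fixed graph depending only on $s$, and what gives $\mathrm{gpn}(v_1,v_2)=2^s$.
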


\begin{proof}
Assume that $G$ is not a balanced square chain. Due to Lemma
\ref{Lemma_maximal} we may assume that $G$ is a maximal square chain, hence
$G$ is not balanced. Since $n>2k+1,$ the graph $G$ must contain at least one
square. We show that there exists a balanced square chain $G^{\prime}$ with
$\mathrm{gpn}(G^{\prime})>\mathrm{gpn}(G).$ We distinguish the following two
cases with respect to $n.$

\medskip

\noindent\textbf{Case 1.} $n\geq3k+1.$ In this case all $k$ cycles of $G$ are
squares. This implies that $K_{1}$ and $K_{2}$ are acyclic structures. Since
$G$ is not balanced, we know that $\left\vert V(K_{1})\right\vert
\geq\left\vert V(K_{2})\right\vert +2.$ Let $u$ be a leaf in $K_{1}$ hanging
at a vertex $v.$ Also, let $w$ be any vertex of $K_{2}.$ The graph $G^{\prime
}$ is obtained from $G$ by removing the edge $uv$ and adding the edge $uw$
instead. We show that $\mathrm{gpn}(G^{\prime})>\mathrm{gpn}(G).$

Denote by $S$ the set of all squared vertices of $G,$ and $V_{i}%
=V(K_{i})\backslash S$ for $i\in\{1,2\}.$ Notice that $V(G)=S\cup V_{1}\cup
V_{2}.$ Let $x,y\in V(G)\backslash\{u\}$ be any pair of vertices, then
obviously, $\mathrm{gpn}_{G^{\prime}}(x,y)=\mathrm{gpn}_{G}(x,y)$. So, let us
assume $x=u.$ It is easily seen that $\mathrm{gpn}_{G^{\prime}}%
(u,S)=\mathrm{gpn}_{G}(u,S),$ even though it may happen $\mathrm{gpn}%
_{G^{\prime}}(u,x)\not =\mathrm{gpn}_{G}(u,x)$ for most vertices $x\in S.$
Also, we have%
\begin{align*}
\mathrm{gpn}_{G^{\prime}}(u,V_{1})  &  =\mathrm{gpn}_{G^{\prime}}%
(u,V_{1}\backslash\{u\})+1=2^{k}\mathrm{gpn}_{G}(u,V_{1}\backslash
\{u\})+1=2^{k}\mathrm{gpn}_{G}(u,V_{1})-2^{k}+1,\\
\mathrm{gpn}_{G^{\prime}}(u,V_{2})  &  =2^{-k}\mathrm{gpn}_{G}(u,V_{2}).
\end{align*}
This implies
\begin{align*}
\mathrm{gpn}(G^{\prime})-\mathrm{gpn}(G)  &  =(2^{k}-1)\mathrm{gpn}%
_{G}(u,V_{1})-2^{k}+1+(2^{-k}-1)\mathrm{gpn}_{G}(u,V_{2})\\
&  =(2^{k}-1)\left\vert V_{1}\right\vert -2^{k}+1+(2^{-k}-1)2^{k}\left\vert
V_{2}\right\vert \\
&  =(2^{k}-1)(\left\vert V_{1}\right\vert -\left\vert V_{2}\right\vert -1)\\
&  \geq(2^{k}-1)(\left\vert V_{2}\right\vert +2-\left\vert V_{2}\right\vert
-1)>0,
\end{align*}
so the claim is established in this case.

\medskip

\noindent\textbf{Case 2.} $2k+1<n<3k+1.$ In this case not all $k$ cycles of
$G$ are squares. Denote by $c$ the number of squares in $G.$ Since $G$ is
maximal square chain and $n>2k+1,$ we know that $c\geq1.$ Recall that a
maximal square chain in this case cannot contain a bridge. Also, we assumed
$G$ is not balanced. In this case that means $\left\vert V(K_{1})\right\vert
\geq\left\vert V(K_{2})\right\vert +3,$ i.e. the unipath component $K_{1}$
contains at least two triangles more than $K_{2}.$ Let $C=uvwu$ be a triangle
of $K_{1}$ on which only $u$ is active. Since $K_{1}$ is a good unipath
component, such cycle $C$ must exist. Denote by $z$ any vertex of $K_{2}.$ Let
$G^{\prime}$ be the graph obtained from $G$ by removing edges $uv$, $uw$ and
adding edges $zv,$ $zw$ instead. We show that $\mathrm{gpn}(G^{\prime
})>\mathrm{gpn}(G).$

We define $S,$ $V_{1}$ and $V_{2}$ as in Case 1, and let $T=\{v,w\}.$ In order
to establish the difference $\mathrm{gpn}(G^{\prime})-\mathrm{gpn}(G),$ we
have to consider only $\mathrm{gpn}_{G^{\prime}}(x,y)-\mathrm{gpn}_{G}(x,y)$
for $x\in T$ and $y\in V(G)\backslash T,$ since for all other pairs of
vertices the number of shortest paths is the same in $G^{\prime}$ as in $G$.
Notice that%
\begin{align*}
\mathrm{gpn}_{G^{\prime}}(T,V_{1}\backslash T)  &  =2^{c}\mathrm{gpn}%
_{G}(T,V_{1}\backslash T),\\
\mathrm{gpn}_{G^{\prime}}(T,V_{2})  &  =2^{-c}\mathrm{gpn}_{G}(T,V_{2}),\\
\mathrm{gpn}_{G^{\prime}}(T,S)  &  =\mathrm{gpn}_{G}(T,S).
\end{align*}
This implies%
\begin{align*}
\mathrm{gpn}(G^{\prime})-\mathrm{gpn}(G)  &  =(2^{c}-1)\mathrm{gpn}%
_{G}(T,V_{1}\backslash T)+(2^{-c}-1)\mathrm{gpn}_{G}(T,V_{2})\\
&  =(2^{c}-1)\cdot2(\left\vert V_{1}\right\vert -2)+(2^{-c}-1)\cdot2\left\vert
V_{2}\right\vert \\
&  \geq(2^{c}-1)\cdot2(\left\vert V_{2}\right\vert +3-2)+(2^{-c}%
-1)\cdot2\left\vert V_{2}\right\vert \\
&  =2^{1-c}\left(  2^{c}-1\right)  \left(  2^{c}\left\vert V_{2}\right\vert
-\left\vert V_{2}\right\vert +2^{c}\right)  >0,
\end{align*}
and the claim is established.
\end{proof}

\begin{figure}[ptbh]
\begin{center}%
\begin{tabular}
[t]{ll}%
\raisebox{-0.9\height}{\includegraphics[scale=0.6]{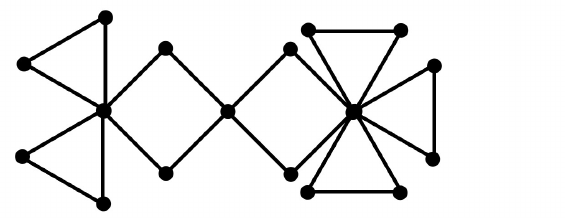}} &
\raisebox{-0.9\height}{\includegraphics[scale=0.6]{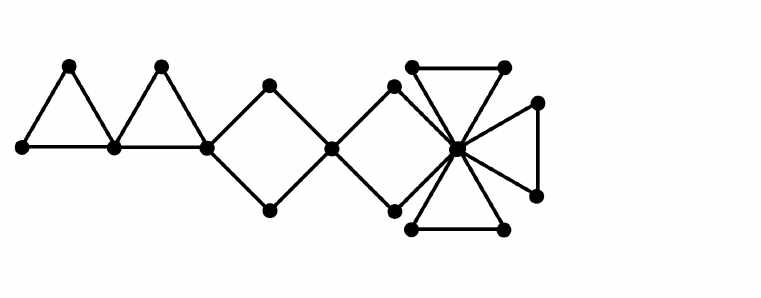}}\\
\raisebox{-0.9\height}{\includegraphics[scale=0.6]{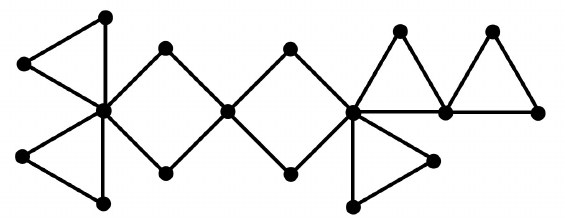}} &
\raisebox{-0.9\height}{\includegraphics[scale=0.6]{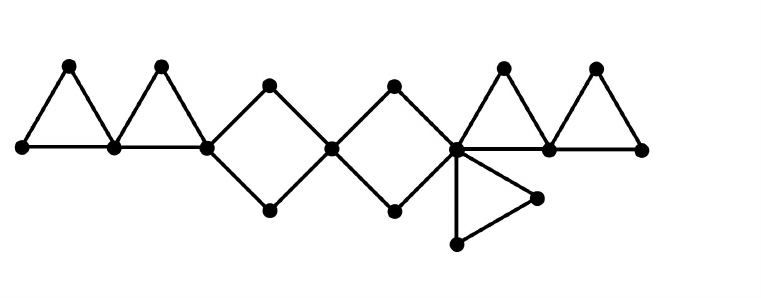}}\\
\raisebox{-0.9\height}{\includegraphics[scale=0.6]{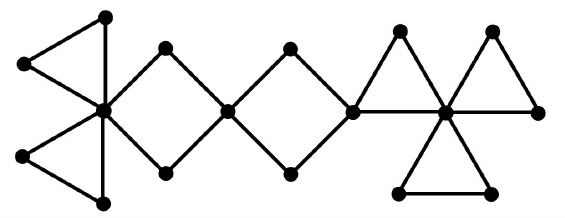}} &
\raisebox{-0.9\height}{\includegraphics[scale=0.6]{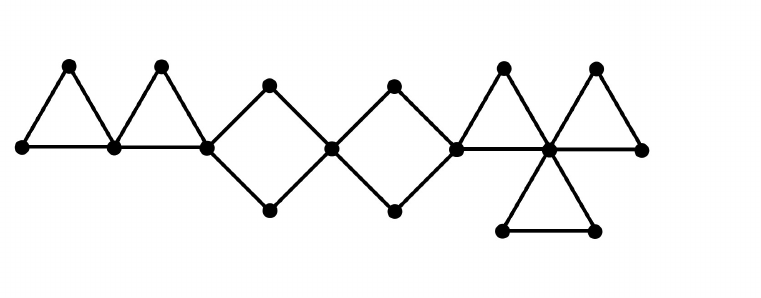}}\\
\raisebox{-0.9\height}{\includegraphics[scale=0.6]{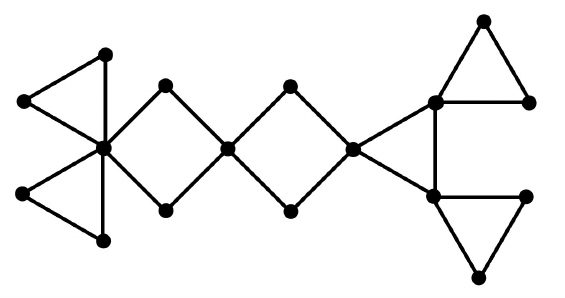}} &
\raisebox{-0.9\height}{\includegraphics[scale=0.6]{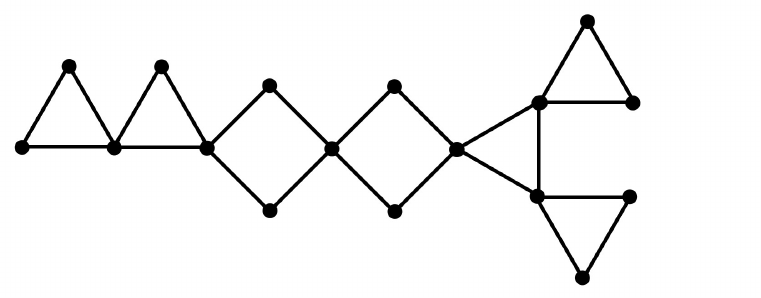}}\\
\raisebox{-0.9\height}{\includegraphics[scale=0.6]{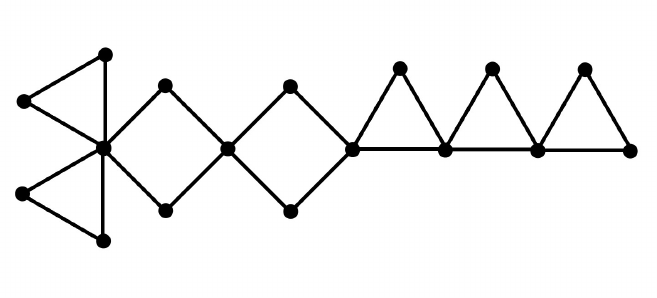}} &
\raisebox{-0.9\height}{\includegraphics[scale=0.6]{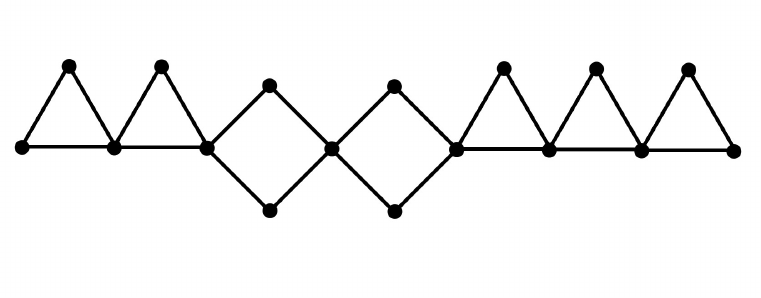}}
\end{tabular}
\end{center}
\caption{All maximal cacti for $n=17$ and $k=7.$}%
\label{Fig_maxTriangles}%
\end{figure}

To recapitulate the results on maximal cacti from this section, we
characterize maximal cacti in the class $\mathcal{C}_{n,k}$ of all cacti on
$n$ vertices with $k$ cycles. This class is non-empty only for $n\geq2k+1.$
The characterization of maximal cacti is distinct for $n<3k+1$ and
$n\geq3k+1.$

In the case $n<3k+1,$ a cactus graph does not contain enough vertices to
contain $k$ squares, so some of the cycles must be triangles. A cactus graph
in this case cannot contain a bridge, since a bridge could be used to enlarge
one triangle into a square. Notice that unipath resolved cacti can contain at
most two non-trivial unipath structures, hanging at the first and the last
square of the square chain. In this case the two unipath structures of maximal
cacti contain only triangles. Also, these two structures must have almost the
same number of vertices. This means that it is either the same if the number
of triangles is even, or it differs by $2$ if the number of triangles is odd.
We illustrate this case by Figure \ref{Fig_maxTriangles} where all maximal
cacti for $n=17$ and $k=7$ are shown.

\begin{figure}[ptbh]
\begin{center}%
\begin{tabular}
[t]{ll}%
\raisebox{-0.9\height}{\includegraphics[scale=0.6]{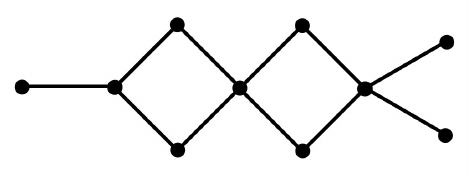}} &
\raisebox{-0.9\height}{\includegraphics[scale=0.6]{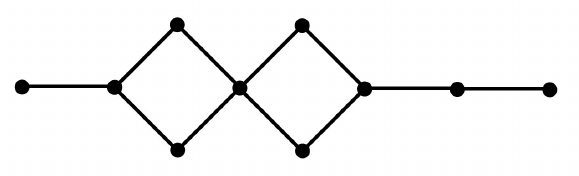}}
\end{tabular}
\end{center}
\caption{All maximal cacti for $n=9$ and $k=2.$}%
\label{Fig_maxAcyclic}%
\end{figure}

In the case of $n\geq3k+1,$ all $k$ cycles of a maximal cactus graph must be
squares. Hence, the two unipath structures must be acyclic. Since their number
of vertices must be almost the same, this implies that it is either the same
or it differs by one, which depends on the parity of $n.$ This case is
illustrated by Figure \ref{Fig_maxAcyclic} which shows all maximal cacti for
$n=9$ and $k=2.$

\bigskip

\section{Concluding remarks}

In this paper we count geodesic subpaths in graphs. Namely, motivated by the
number of subtrees and the number of subpaths which already exist in
literature, but are rather large quantities, we introduce the geodesic subpath
number which is related to them, but is obviously smaller. We initiate the
study of this new invariant by searching for extremal graphs with respect to
the geodesic subpath number among all simple connected graphs on $n$ vertices.
The geodetic graphs, i.e. graphs in which every pair of vertices is connected
by precisely one geodesic path, are minimal.

The question of characterizing maximal graphs seems to be more difficult. In
Theorem \ref{thm:bound} we provide the following upper bound%
\begin{equation}
\mathrm{gpn}(G)\leq\frac{n}{4}(23\cdot3^{\frac{n-6}{3}}+1).\label{For_bound}%
\end{equation}
We next establish the exact value of the geodesic subpath number for several
graph families such as grids, hypercubes and graphs $G_{k,t}$ which are
sequential joins of $t$ copies of an empty graph $D_{k}$ on $k$ vertices.
Among these, we establish that $G_{3,n/3}$ has the largest value of the
geodesic subpath number. Since%
\begin{equation}
\mathrm{gpn}(G_{3,n/3})\sim\left(  \frac{3}{2}\right)  ^{2}\cdot
3^{n/3},\label{For_max}%
\end{equation}
the gap between the established upper bound and the value of $\mathrm{gpn}%
(G_{3,n/3})$ is still significant. Hence, we propose the following problem.

\begin{problem}
Find an upper bound on the geodesic subpath number smaller than \emph{(}%
\ref{For_bound}\emph{)} and/or find a graph $G$ with the value of the geodesic
subpath number larger than \emph{(}\ref{For_max}\emph{)}.
\end{problem}

We believe that there is more space to improve the upper bound of Theorem
\ref{thm:bound} than to find graphs $G$ with $\mathrm{gpn}(G)>\mathrm{gpn}%
(G_{3,n/3}).$ The reason for this is that in the proof of Theorem
\ref{thm:bound} a large number of long geodesics is assumed, but long
geodesics imply many pairs of vertices connected by a shorter geodesic, which
is not adequately tackled here.

In this paper we also characterize extremal graphs in the class of cacti on
$n$ vertices with $k$ cycles. Other classes of graphs might also be
interesting. For example, among complete bipartite graphs it seems easy to
establish that maximal graphs are the graphs with partite sets of the same or
almost the same cardinality. Yet, removing a perfect matching from such a
graph may result in a bipartite graph with a larger value of the geodesic
subpath number. Hence, the following problem might be interesting.

\begin{problem}
Characterize maximal graphs with respect to the geodesic subpath number in the
class of bipartite graphs on $n$ vertices.
\end{problem}

\vskip1pc \noindent\textbf{Acknowledgments.}~~The authors are very grateful to
the reviewer for valuable comments for the improvement of the paper. Also, the
authors acknowledge the partial support by Slovak research grants VEGA
1/0011/25, VEGA 1/0069/23, APVV-23-0076 and APVV-22-0005, by ARIS\ projects
J1-3002 and J1-70016, program\ P1-0383, bilateral Slovenian-Croatian project
BI-HR/25-27-004 and the annual work program of Rudolfovo, by Project
KK.01.1.1.02.0027 co-financed by the European Regional Development Fund, by
the Croatian Science Foundation under project number HRZZ-IP-2024-05-2130, by
Croatian Ministry of Science, Education and Youth trough the bilateral
Croatian-Slovenian project 2025-26, by the NextGeneration EU foundation via
IP-UNIST-17 (GEORAZ), by the National Natural Science Foundation of China
(No.12371354) and the Montenegrin Chinese Science and Technology (No.4-3).




\begin{thebibliography}{99}                                                                                               %


\bibitem {AtajanSpanning}T.~Atajan, X.~Yong, H.~Inaba, An efficient approach
for counting the number of spanning trees in circulant and related graphs,
\emph{Discrete Math.} \textbf{310} (2010) 1210--1221.

\bibitem {BapatSpanning}R.~B.~Bapat, A.~K.~Lal, S.~Pati, Laplacian spectrum of
weakly quasi-threshold graphs, \emph{Graphs Comb.} \textbf{24} (2008) 273--290.

\bibitem {BrownSpanning}T.~J.~N.~Brown, R.~B.~Mallion, P.~Pollak, A.~Roth,
Some methods for counting the spanning trees in labeled molecular graphs,
examined in relation to certain fullerenes, \emph{Discrete Appl. Math.}
\textbf{67} (1996) 51--66.

\bibitem {Cornelsen2022}S.~Cornelsen, M.~Pfister, H.~F\"{o}rster,
M.~Gronemann, M.~Hoffmann, S.~Kobourov, T.~Schneck, Drawing shortest paths in
geodetic graphs, \emph{J. Graph Algorithms Appl.} \textbf{26(3)} (2022) 353--361.

\bibitem {Czabarka2008}\'{E}.~Czabarka, L.~Sz\'{e}kely, S.~Wagner, The inverse
problem for certain tree parameters, \emph{Discrete Appl. Math. }%
\textbf{157(15)} (2009) 3314--3319.

\bibitem {Elder2021}M.~Elder, A.~Piggott, Rewriting systems, plain groups, and
geodetic graphs, \emph{Theor. Comput. Sci.} \textbf{903} (2022) 134-144.

\bibitem {FengShortest}Q.~Feng, Y.~Peng, W.~Zhang, X.~Lin, Y.~Zhang, DSPC:
Efficiently Answering Shortest Path Counting on Dynamic Graphs,
\emph{Proceedings of the 27th International Conference on Extending Database
Technology (EDBT)} (2024) 116--128.

\bibitem {Gorovoy}D.~Gorovoy, D.~Zmiaikou, On graphs with unique geodesics and
antipodes, \emph{Discrete Math.} \textbf{347(4)} (2024) 113864.

\bibitem {Kirk2008}R.~Kirk, H.~Wang, Largest number of subtrees of trees with
a given maximum degree, \emph{SIAM J. Discrete Math.} \textbf{22} (2008) 985--995.

\bibitem {Knor2025}M.~Knor, J.~Sedlar, R.~\v{S}krekovski, Y.~Yang, Invitation
to the subpath number, \emph{Appl. Math. Comput.} \textbf{509} (2026) 129646.

\bibitem {Knor2025b}M.~Knor, J.~Sedlar, R.~\v{S}krekovski, Y.~Yang, The
subpath number of cactus graphs, \emph{Comp. Appl. Math.} \textbf{45} (2026) 102.

\bibitem {risteSurvey1}M.~Knor, R.~\v{S}krekovski, A.~Tepeh, Mathematical
aspects of Wiener index, \emph{Ars Math. Contemp.} \textbf{11} (2016) 327--352.

\bibitem {risteSurvey2}M.~Knor, R.~\v{S}krekovski, A.~Tepeh, Selected topics
on Wiener index, \emph{Ars Math. Contemp.} (2024) \#P4.07.

\bibitem {Kuziak}D.~Kuziak, I.~G.~Yero, Metric dimension related parameters in
graphs: A survey on combinatorial, computational and applied results. (2021)
arXiv preprint arXiv:2107.04877 [math.CO].

\bibitem {Cacti2022}J.~Li, K.~Xu, T.~Zhang, H.~Wang, S.~Wagner, Maximum number
of subtrees in cacti and block graphs, \emph{Aequat. Math.} \textbf{96} (2022) 1027--1040.

\bibitem {NikolopoulosAlgoritamSpanning}S.~D.~Nikolopoulos, L.~Palios,
C.~Papadopoulos, Counting spanning trees using modular decomposition,
\emph{Theor. Comput. Sci.} \textbf{526} (2014) 41--57.

\bibitem {NikopoulosSpanning}S.~D.~Nikolopoulos, P.~Rondogiannis, On the
number of spanning trees of multi-star related graphs, \emph{Inf. Process.
Lett.} \textbf{65} (1998) 183--188.

\bibitem {Ore1962}\O .~Ore, Theory of Graphs, Amer. Math. Soc. Colloq. Publ.,
Vol. 38, Amer. Math. Soc., Providence, RI, 1962.

\bibitem {PengShortest}Y.~Peng, J.~X.~Yu, S.~Wang, PSPC: Efficient Parallel
Shortest Path Counting on Large-Scale Graphs, \emph{Proceedings of the 39th
IEEE International Conference on Data Engineering (ICDE)} (2023) 896--908.

\bibitem {Peterin}I.~Peterin, J.~Sedlar, R.~\v{S}krekovski, I.~G.~Yero,
Resolving vertices of graphs with differences, \emph{Comput. Appl. Math.}
\textbf{43 (5)} (2024) 275.

\bibitem {QiuShortest}Y.~Qiu, R.~Jin, Z.~Zhao, Efficient Shortest Path
Counting on Large Road Networks, \emph{Proceedings of the VLDB Endowment}
\textbf{15(10)} (2022) 2098--2110.

\bibitem {Szekely2005}L.~A.~Sz\'{e}kely, H.~Wang, On subtrees of trees,
\emph{Adv. Appl. Math.} \textbf{34} (2005) 138--155.

\bibitem {Szekely2007}L.~A.~Sz\'{e}kely, H.~Wang, Binary trees with the
largest number of subtrees, \emph{Discrete Appl. Math.} \textbf{155} (2007) 374--385.

\bibitem {Indijka}S. Varghese, A. Lakshmanan S., S. Arumugam, Two extensions
of Leech labeling to the class of all graphs, \emph{AKCE Int. J. Graphs Comb.}
\textbf{19(2)} (2022) 159--165.

\bibitem {Xu2021}K.~Xu, J.~Li, H.~Wang, The number of subtrees in graphs with
given number of cut edges, \emph{Discrete Appl. Math.} \textbf{304} (2021) 283--296.

\bibitem {Yamamoto2017}M.~Yamamoto, Approximately counting paths and cycles in
a graph, \emph{Discrete Appl. Math. }\textbf{217} (2009) 381--387.

\bibitem {YanSpanning}W.-M.~Yan, W.~Myrvold, K.-L.~Chung, A formula for the
number of spanning trees of a multi-star related graph, \emph{Inf. Process.
Lett.} \textbf{68} (1998) 295--298.

\bibitem {Zhang2013}X.~M.~Zhang, X.~D.~Zhang, D.~Gray, H.~Wang, The number of
subtrees of trees with given degree sequence, \emph{J. Graph Theory}
\textbf{73} (2013) 280--295.
\end{thebibliography}
\end{document}